\title{Non-deterministic inductive definitions}
\author{Benno van den Berg\footnote{Mathematisch Instituut, Universiteit Utrecht,
PO. Box 80010, 3508 TA Utrecht. Email address: B.vandenBerg1@uu.nl. Supported by the Netherlands Organisation for Scientific Research (NWO).
}}
\date{August 10, 2012}
\begin{document}

\maketitle

\begin{abstract}
\noindent
We study a new proof principle in the context of constructive Zermelo-Fraenkel set theory based on what we will call ``non-deterministic inductive definitions''. We give applications to formal topology as well as a predicative justification of this principle.
\end{abstract}

\section{Introduction}

There is a distinctive foundational stance that has sometimes been called ``generalised predicativity'', which is characterised by a rejection of impredicative definitions combined with an acceptance of a wide variety of inductively defined sets. The system which expresses this philosophy in its purest form is Martin-L\"of's type theory \cite{martinlof84}.

Martin-L\"of's type theory is intended to be an open-ended framework and, as a consequence, comes in different versions of varying strength. For the purposes of this paper, the relevant system is the theory {\bf ML$_{1W}$V} from \cite{grifforrathjen94} and \cite{rathjentupailo06}, with one iterative universe closed under W-types and with the ``extensional'' rules for the identity types, as in \cite{martinlof84}. (We believe the arguments in this paper still go through if one works with the intensional identity types and the axiom of function extensionality. It is unclear to us what happens when we drop function extensionality.) Occasionally, we will also consider the system {\bf ML$_{1}$V}, where we drop the requirement that the iterative universe it is closed under W-types.

Working in {\bf ML$_{1W}$V} is hard, however, for several reasons: the syntax is unfamiliar and complicated, and it lacks extensional constructs (like quotient types). For this reason, people have sought systems which are easier to work with, but can still be interpreted in type theory. The most prominent among them is Peter Aczel's constructive set theory {\bf CZF} \cite{aczel78,aczelrathjen01}.

But constructive set theory has some difficulties in catching up on the type theory, for there are some mathematical results, especially in what is called formal topology, which can be proved in {\bf ML$_{1W}$V}, but not in {\bf CZF}. Formal topology is a particular approach to the subject of topology, which, by taking the notion of a basis as the starting point, can be developed in a way which is acceptable from the generalised predicative point of view. It should be one of features of {\bf CZF} that it can act as a set-theoretic foundation for formal topology, but it does not quite live up to that.

The first problem one encounters is that formal topology makes heavy use of inductive definitions,  but {\bf CZF} is unable to prove that these generate sets. This was remedied early on by Peter Aczel, when he introduced the Regular Extension Axiom {\bf REA} \cite{aczel86}. {\bf CZF} + {\bf REA} can still be interpreted in {\bf ML$_{1W}$V}  and allows one to prove the existence of a wide variety of inductively generated sets. An alternative solution (the combination of {\bf WS} and {\bf AMC}) was proposed by the author together with Ieke Moerdijk \cite{bergmoerdijk11}. Indeed, in both extensions of {\bf CZF} one can prove the Set Compactness Theorem, which allows one to prove that every inductively generated formal space is set-presented, which is an important fact in formal topology.

But, unfortunately, it seems that {\bf CZF} + {\bf REA} and {\bf CZF} + {\bf WS} + {\bf AMC} are still not capable of capturing all the desired results in formal topology. In particular, people have not managed to prove in these systems Palmgren's results from \cite{palmgren05, palmgren06} on points and coequalizers of formal spaces (see Section 5 below). We do not have a proof that this is impossible (although we are inclined to think that it is).

In the meantime, various people have attempted to find set-theoretic principles which would allow one to prove these other results. The subject of this paper is the author's proposal for such a principle, which he has dubbed NID, for ``non-deterministic inductive definitions''. The contention is that the NID principle provides an elegant and relatively simple solution to proving all the additional results in formal topology which go beyond these frameworks. Moreover, the principle is acceptable from a generalised predicative point of view, because it is valid under the type-theoretic interpretation of {\bf CZF} in {\bf ML$_{1W}$V} (this will be proved in Section 6). Another proposal is due to Aczel, Ishihara, Nemoto and Sangu (see \cite{aczeletal12}) and we compare our proposals in Section 7 of this paper.

The contents of this paper are therefore as follows: in Section 2 we formulate the NID principle and in Section 3 we prove some first applications, especially to inductive and coinductive types. To get a more streamlined presentation of applications of NID to formal topology, it turns out to be helpful to have a formulation of the NID principle in terms of logic (an idea of Peter Aczel). This reformulation we will develop in Section 4. In Section 5, then, we proceed to develop the applications of the NID principle to formal topology. After that, in Section 6, we will show that the NID principle is validated by the interpretation of {\bf CZF} in Martin-L\"of's type theory with one inductive universe closed under W-types. This section was inspired by earlier work by Aczel \cite{aczel06} and unpublished notes by Ishihara. After that, in Section 7, we will compare our work with the preprint \cite{aczeletal12}. Finally, we end this paper with formulating several questions which have been left open in this paper.

The author would like to thank Hajime Ishihara for showing him some early drafts of \cite{aczeletal12} and for very useful and enjoyable discussions, also with Erik Palmgren, during the author's stay at the Institut Mittag-Leffler in Fall 2009. He is also grateful to the Institut for awarding him a fellowship, as well as to the referees for useful comments.

\section{The NID principle}

We work in {\bf CZF}, unless expressly indicated otherwise.

\begin{rema}{notation}
Throughout this paper we will call a set $A$ \emph{finite}, if there is a natural number $n \in \NN$ and a surjection $\{1, \ldots, n \} \to A$. Such sets have also been called K-finite (for Kuratowski-finite) or finitely enumerable to distinguish them from other constructive notions of finiteness. An important and useful property of these K-finite sets is that the collection Finpow($A$) of K-finite subsets of a set $A$ can be proved to be a set in {\bf CZF}.

We will write ${\rm Pow}(A)$ for the collection of all subsets of a set $A$. Of course, this cannot be proved to be a set in {\bf CZF}.
\end{rema}

\begin{defi}{rules}
Let $X$ be any set. By a \emph{rule} on $X$, we will mean a pair $(a, b)$ with $a$ and $b$ subsets of $X$. A rule is called \emph{elementary} if $a$ is a singleton and \emph{finitary} if $a$ is finite. If $b$ is a singleton, the rule will be called \emph{deterministic}. A subset $Y \subseteq X$ is \emph{closed} under the rule $(a, b)$, if
\[ a \subseteq Y \Rightarrow b \between Y. \]
Recall that $b \between Y$ is Sambin's notation for: the intersection of $b$ and $Y$ is inhabited. Therefore the intuitive meaning of a rule $(a, b)$ is: if all elements of $a$ belong to the set, then at least one element from $b$ should belong to the set. Finally, if ${\cal R}$ is a set of rules on $X$, we will call a subset $Y \subseteq X$ ${\cal R}$-closed, if it is closed under all rules in ${\cal R}$, and write ${\rm Clos}_{\cal R}(X)$ for the class consisting of all ${\cal R}$-closed subsets of $X$.
\end{defi}

\begin{exam}{primeideals}
An example of a non-deterministic inductive definition is the notion of a prime ideal in a unital, commutative ring. For if $A$ is a commutative ring with 1, then a prime ideal is a subset $P \subseteq A$ which is closed under the following rules:
\begin{displaymath}
\begin{array}{ccccccc}
\infer[r, s \in A]{ \{ r + s \} }{ \{ r, s \} } & &
\infer[r, s \in A]{ \{rs \} }{ \{ s \} } & &
\infer[r, s \in A]{ \{r, s \} }{ \{rs \} } & &
\infer{\emptyset}{ \{ 1 \} }
\end{array}
\end{displaymath}
\end{exam}

\begin{defi}{set-generated}
A subclass $M$ of ${\rm Pow}(X)$, where $X$ is a set, will be called \emph{set-generated}, if there is a subset $G$ of $M$ such that 
\[ (\forall \alpha \in M) \, \alpha = \bigcup \, \{ \beta \in G \, : \, \beta \subseteq \alpha \}, \]
or, equivalently,
\[ (\forall \alpha \in M) \, (\forall a \in \alpha) \, (\exists \beta \in G) \, a \in \beta \subseteq \alpha. \]
\end{defi}

\begin{rema}{setgenframe}
Compare the notion of a set-generated frame: a frame $X$ is set-generated if there is a subset $G \subseteq X$ such that 
\[ x = \bigvee \, \{ g \in G \, : \, g \leq x \} \]
for every $x \in X$.
\end{rema}

The NID principle now reads:
\begin{quote}
{\bf NID principle:} For any set $X$ and set of rules ${\cal R}$ on $X$, the class ${\rm Clos}_{\cal R}(X)$ is set-generated.
\end{quote}
Weaker principles can be obtained by requiring all the rules in ${\cal R}$ to be elementary or finitary: these will be called the elementary and finitary NID principle, respectively. Clearly, NID implies finitary NID implies elementary NID.

\begin{exam}{primeidealsctnd}
Hence finitary NID implies that the class of prime ideals in a commutative ring with unit is set-generated.
\end{exam}

\begin{rema}{NIDprovableinIZF} Note that ${\rm Clos}_{\cal R}(X)$ can be proved to be a set using the Powerset and Full Separation axioms. Therefore the NID principle is provable in {\bf IZF}.
\end{rema}

\section{First applications}

In this section we give some applications of the NID principle. The first is a bit of a curiosity: it says that Fullness is a consequence of elementary NID (over {\bf CZF$^-$}, which is {\bf CZF} minus the Subset collection axiom).

\begin{prop}{elimNIDimpliesfullness}
In {\bf CZF$^-$}, elementary NID implies Fullness.
\end{prop}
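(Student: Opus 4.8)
The plan is to realise the multivalued functions from $A$ to $B$ as (the inhabited part of) the closed subsets of a suitable set of \emph{elementary} rules, and then read off a full family from the set-generating family that elementary NID provides. Recall that a multivalued function (total relation) from $A$ to $B$ is a subset $R \subseteq A \times B$ with $(\forall a \in A)(\exists b \in B)\,(a,b) \in R$, that I write $\mathrm{mv}(A,B)$ for the class of these, and that Fullness asks, for all sets $A$ and $B$, for a \emph{set} $C \subseteq \mathrm{mv}(A,B)$ such that every $R \in \mathrm{mv}(A,B)$ contains some $S \in C$.

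First I would fix a fresh point $\star \notin A \times B$ --- concretely $\star := A \times B$, which lies outside $A \times B$ by $\in$-induction --- and put $X := (A \times B) \cup \{\star\}$. As rules I take
\[ \mathcal{R} := \{\,(\{\star\},\ \{a\} \times B)\ :\ a \in A\,\}, \]
which is a set by Replacement and all of whose premises are singletons, so the rules are elementary. The two facts driving the argument are: (i) if $R$ is total then $\{\star\} \cup R$ is $\mathcal{R}$-closed, since the premise $\{\star\}$ is met and totality of $R$ supplies, for each $a$, an element of $\{a\} \times B$ lying in $R$; and (ii) conversely, if $Y$ is $\mathcal{R}$-closed and $\star \in Y$, then every rule fires, so $Y \cap (A \times B)$ is total, i.e.\ a multivalued function.

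Next I would apply elementary NID to obtain a set $G \subseteq \mathrm{Clos}_{\mathcal{R}}(X)$ with $\alpha = \bigcup \{\beta \in G : \beta \subseteq \alpha\}$ for every closed $\alpha$, and define
\[ C := \{\,\beta \cap (A \times B)\ :\ \beta \in G,\ \star \in \beta\,\}, \]
a set by Bounded Separation (to carve out $\{\beta \in G : \star \in \beta\}$) and Replacement (to take the image under $\beta \mapsto \beta \cap (A\times B)$). By (ii) each member of $C$ is total, so $C \subseteq \mathrm{mv}(A,B)$. For fullness, given any total $R$, fact (i) makes $Y := \{\star\} \cup R$ closed, so by the (equivalent) defining property of set-generation applied at the element $\star \in Y$ there is $\beta \in G$ with $\star \in \beta \subseteq Y$; then $S := \beta \cap (A \times B) \in C$ satisfies $S \subseteq Y \cap (A \times B) = R$, since $\star \notin A \times B$ and $R \subseteq A \times B$.

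The step I expect to be the real obstacle is obtaining a \emph{uniform} (excluded-middle-free) treatment of the degenerate cases. The naive encoding on $X = A \times B$ with rules $(\{(a,b)\}, \{a'\} \times B)$ also makes the total relations exactly the inhabited closed sets, but then the empty relation --- which is total precisely when $A$ is empty --- is never forced into $G$, and recovering it in $C$ appears to require a case split on whether $A$ is inhabited, i.e.\ an instance of excluded middle. Adjoining the base point $\star$ is exactly the device that circumvents this: because $\star$ always belongs to the closed set $\{\star\} \cup R$, set-generation can be invoked at $\star$ uniformly in $R$, with no appeal whatsoever to the inhabitedness of $A$ or of $B$ (when $B = \emptyset$ the rule conclusions become empty, correctly preventing $\star$ from entering any closed set and yielding $C = \emptyset$, matching the absence of multivalued functions).
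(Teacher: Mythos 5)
Your proof is correct and takes essentially the same approach as the paper, which also adjoins a base point (working on $1 + A + A \times B$) and forms the full set by intersecting the generators containing $*$ with $A \times B$. The only difference is cosmetic: the paper routes through an intermediate layer $A$ via the two elementary rules $\{*\} \Rightarrow \{a\}$ and $\{a\} \Rightarrow \{(a,b) : b \in B\}$, whereas you collapse these into the single rule $(\{\star\}, \{a\} \times B)$.
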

\begin{proof}
Recall that Fullness is the statement that for any two sets $A$ and $B$ there is a set $\Sigma$ of total relations from $A$ to $B$ such that every total relation is refined by (i.e., contains) one in this set $\Sigma$. Therefore consider sets $A$ and $B$ and consider the following set of non-deterministic rules on $1 + A + A \times B$:
\begin{displaymath}
\begin{array}{ccc}
\infer[* \in 1, a \in A]{ \{ a \} }{ \{ * \} } & &
\infer[ a \in A ]{ \{ (a, b) \, : b \in B \} }{ \{ a \} }
\end{array}
\end{displaymath}
If ${\cal I}$ generates the closed sets for this elementary non-deterministic inductive definition, then ${\cal J} = \{ I \cap A \times B \, : \, I \in {\cal I}, * \in I \}$ is a full set of total relations from $A$ to $B$. For if $R$ is a total relation from $A$ to $B$, then $1 + A + R$ is a subset of $1 + A + A \times B$ closed under the non-deterministic rules. So there is an element $I \in {\cal I}$ with $* \in I$ and $I \subseteq 1 + A + R$. Then we have for $J = I \cap A \times B$ that $J \subseteq R$ and $J \in {\cal J}$.
\end{proof}

The results below show that elementary NID is especially useful for proving statements related to coinductive types and bisimulation. One problem which elementary NID solves is the following: when one tries to show the consistency of the Anti-Foundation Axiom in {\bf CZF} along the lines of Aczel's book \cite{aczel88}, one has to show that the bisimularity relation is $\Delta_0$ (or that the statement that two graphs are bisimular ``has a small truth-value''). Proving this in {\bf CZF} seems to be hard, if not impossible. But one can readily prove this in {\bf CZF} extended with elementary NID:

\begin{prop}{elementaryNIDandbisimulation}
If $(A, R)$  and $(B, S)$ are two graphs, then elementary NID implies that the class of bisimulations from $(A, R)$ to $(B, S)$ is set-generated. Therefore it also implies that the statement that $(A, R)$ and $(B, S)$ are bisimular is equivalent to a bounded (or $\Delta_0$-)formula.
\end{prop}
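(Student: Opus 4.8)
The plan is to realise the class of bisimulations as $\mathrm{Clos}_{\cal R}(X)$ for a well-chosen set $X$ and set $\cal R$ of elementary rules, and then to read off both assertions. Since a bisimulation from $(A,R)$ to $(B,S)$ is by definition a subset of $A \times B$, I would take $X = A \times B$. Recall that $Y \subseteq A \times B$ is a bisimulation precisely when, for every $(x,y) \in Y$, the forth clause (for each $x'$ with $x\,R\,x'$ there is $y'$ with $y\,S\,y'$ and $(x',y') \in Y$) and the back clause (for each $y'$ with $y\,S\,y'$ there is $x'$ with $x\,R\,x'$ and $(x',y') \in Y$) both hold. I would encode these by the two families of rules
\begin{displaymath}
\begin{array}{ccc}
\infer[x\,R\,x']{ \{ (x',y') \, : \, y\,S\,y' \} }{ \{ (x,y) \} } & &
\infer[y\,S\,y']{ \{ (x',y') \, : \, x\,R\,x' \} }{ \{ (x,y) \} }
\end{array}
\end{displaymath}
the first indexed by triples $(x,y,x')$ with $x\,R\,x'$, the second by triples $(x,y,y')$ with $y\,S\,y'$. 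Both index sets are subsets of sets ($A \times B \times A$ and $A \times B \times B$), so by Replacement these rules form a set $\cal R$, and every rule is elementary because its premise is a singleton. Unwinding the definition of closedness (Definition \ref{rules}), closure under the first family is exactly the forth clause and closure under the second is exactly the back clause; hence $\mathrm{Clos}_{\cal R}(A \times B)$ is precisely the class of bisimulations. Elementary NID then supplies a set $G$ generating this class, which is the first assertion.

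For the second assertion I would use that, for the distinguished points $a_0 \in A$ and $b_0 \in B$ of the two graphs, bisimilarity means that there is a bisimulation $Y$ with $(a_0,b_0) \in Y$. Given such a $Y$, the pointwise formulation of set-generatedness in Definition \ref{set-generated} applied to $\alpha = Y$ and $a = (a_0, b_0)$ yields some $\beta \in G$ with $(a_0, b_0) \in \beta \subseteq Y$; and since $G \subseteq \mathrm{Clos}_{\cal R}(A \times B)$, this $\beta$ is itself a bisimulation relating the points. Conversely, any $\beta \in G$ with $(a_0,b_0) \in \beta$ witnesses bisimilarity directly. Therefore the statement that $(A,R)$ and $(B,S)$ are bisimilar is equivalent to $(\exists \beta \in G)\,(a_0,b_0) \in \beta$, which is bounded, since $G$ is a set and the matrix is $\Delta_0$.

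I expect the only genuine work to lie in the first paragraph: checking that $\cal R$ really is a set and that $\mathcal R$-closedness matches the two bisimulation clauses on the nose. The point to get right is that the non-deterministic disjunction built into $b \between Y$ is precisely what delivers the existential witness each clause demands — a deterministic (singleton-conclusion) rule would be too weak here. Once the encoding is correct, the passage to a bounded formula is immediate from set-generatedness, and the only thing to be careful about is spelling out that ``bisimilar'' refers to the existence of a bisimulation linking the distinguished points.
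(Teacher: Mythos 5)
Your proposal is correct and follows essentially the same route as the paper: the same two families of elementary rules on $A \times B$, with the non-deterministic conclusion supplying the existential witness in each bisimulation clause. The paper leaves the passage to a bounded formula implicit ("the statement of the proposition follows"), whereas you spell it out via the pointwise form of set-generatedness applied to the distinguished points; that is exactly the intended argument.
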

\begin{proof}
Recall that $K \subseteq A \times B$ is a bisimulation, if the following two statements hold:
\begin{itemize}
\item whenever $(a, b) \in K$ and $(a, a') \in R$, there is a $b' \in B$ such that $(a', b') \in K$ and $(b, b') \in S$, and
\item whenever $(a, b) \in K$ and $(b, b') \in S$, there is an $a' \in A$ such that $(a', b') \in K$ and $(a, a') \in R$.
\end{itemize}
Therefore a bisimulation $K$ is nothing but a closed subset of $A \times B$ for the following non-deterministic inductive definition:
\begin{displaymath}
\begin{array}{ccc}
\infer[(a, a') \in R]{ \{ (a', b') \, : \, (b, b') \in S \} }{ \{ (a, b) \} } & &
\infer[(b, b') \in S]{ \{ (a', b') \, : \, (a, a') \in R \} }{ \{ (a, b) \} }
\end{array}
\end{displaymath}
Since this non-deterministic inductive definition consists of elementary rules, the statement of the proposition follows.
\end{proof}

\begin{prop}{minimalinNID}
If ${\cal R}$ is a (elementary, finitary) non-deterministic inductive definition on a set $X$, then (elementary, finitary) NID implies that there is a set ${\cal F}$ of ${\cal R}$-closed subsets of $X$ which is full: for every ${\cal R}$-closed subset $A$ of $X$, there is an element $F \in {\cal F}$ such that $A \subseteq F$. Hence NID implies that the minimal ${\cal R}$-closed subsets of $X$ form a set.
\end{prop}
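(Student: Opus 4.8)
The plan is to feed ${\cal R}$ straight into the NID principle and read both conclusions off the resulting generating set. Since ${\cal R}$ is a (elementary, finitary) non-deterministic inductive definition, the corresponding (elementary, finitary) NID principle provides a \emph{set} $G \subseteq {\rm Clos}_{\cal R}(X)$ of ${\cal R}$-closed subsets with the property that $\alpha = \bigcup \{ \beta \in G : \beta \subseteq \alpha \}$ for every ${\cal R}$-closed $\alpha$. For the full set I would take ${\cal F}$ to be $G$ itself, adjoining $\emptyset$ in the (over ${\cal R}$ decidable) case that $\emptyset$ is ${\cal R}$-closed. The fullness property is then immediate from the displayed identity: it exhibits each ${\cal R}$-closed $A$ as the union of the members of $G$ lying inside it, so every inhabited ${\cal R}$-closed $A$ contains some $F \in {\cal F}$, which is the refinement form of fullness used by the Fullness axiom and exactly the form needed for the minimality clause below.

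The substance is in the clause about minimal sets, where the obstacle is that ``$A$ is minimal ${\cal R}$-closed'' is on its face an unbounded statement, quantifying over the proper class of all ${\cal R}$-closed subsets of $X$, so Separation cannot be applied to it directly. I would remove this unbounded quantifier in two moves. First, every minimal ${\cal R}$-closed $A$ already lies in $G$: choosing any $\beta \in G$ with $\beta \subseteq A$, which exists when $A$ is inhabited, the set $\beta$ is itself ${\cal R}$-closed, so minimality forces $\beta = A$ and hence $A \in G$; the empty minimal set, if any, is covered by the adjoined point. Second, within $G$ one can test minimality by a bounded condition: $A$ is minimal iff there is no $\beta \in G$ with $\beta$ inhabited, $\beta \subseteq A$ and $\beta \ne A$, and $\emptyset$ is not a proper ${\cal R}$-closed subset of $A$. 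The forward direction is clear, and the converse is where set-generation is used again: a proper inhabited ${\cal R}$-closed $B \subsetneq A$ equals $\bigcup \{ \beta \in G : \beta \subseteq B \}$, and so yields some $\beta \in G$ with $\beta \subseteq B \subsetneq A$. Since every quantifier in this test ranges over the set $G$, or over ${\cal R}$, the condition is $\Delta_0$, whence the minimal ${\cal R}$-closed subsets are exactly those $A \in G$ passing the test, a set by Restricted Separation.

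The main obstacle is precisely this reduction of the $\Pi$-shaped minimality condition to a quantifier bounded by $G$; it is the only place where the full strength of set-generation, as opposed to the mere existence of some ${\cal R}$-closed sets, is needed, and it is what lets Restricted Separation carve out the minimal sets. The remaining difficulty is bookkeeping around the empty set: one must separately decide, by a $\Delta_0$ formula quantifying only over ${\cal R}$, whether $\emptyset$ is ${\cal R}$-closed, and if so make sure it is included in ${\cal F}$ and treated correctly in the minimality test, since set-generation by itself does not force $\emptyset \in G$.
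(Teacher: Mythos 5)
Your reading of ``full'' in the refinement direction (some $F \in {\cal F}$ with $F \subseteq A$, rather than $A \subseteq F$ as literally printed) is the right one --- it is what the paper's own proof establishes and what the minimality clause requires --- but the construction you build on it has a genuine constructive gap. Taking ${\cal F}$ to be the generating set $G$ of ${\rm Clos}_{\cal R}(X)$ itself only yields an $F \subseteq A$ when $A$ is \emph{inhabited}: set-generation produces $\beta \in G$ with $a \in \beta \subseteq A$ only once you have an $a \in A$ in hand. Your repair --- adjoin $\emptyset$ ``in the (over ${\cal R}$ decidable) case that $\emptyset$ is ${\cal R}$-closed'' --- does not close this gap. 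First, ``$\emptyset$ is ${\cal R}$-closed'' is a bounded formula (so $\{ \emptyset \, : \, \emptyset \in {\rm Clos}_{\cal R}(X) \}$ is a set by Restricted Separation), but it is \emph{not decidable} in {\bf CZF}: it amounts to $\neg (a \subseteq \emptyset)$ for every premise $a$ of every rule, and a premise may be a set like $\{ x \in 1 \, : \, \phi \}$ for an undecided $\phi$. Second, and more importantly, even a decision on $\emptyset$ would not suffice, because an ${\cal R}$-closed $A$ need not be provably inhabited or provably empty; on $X = 1$ with the single rule $(\{ x \in 1 : \phi \}, \{ 0 \})$ there are Kripke-style models in which some closed $A$ is neither empty nor inhabited, $\emptyset$ is not closed, and the only generator $\{0\}$ is not contained in $A$ --- so your ${\cal F}$ contains nothing below $A$. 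The same problem infects your minimality test: it is phrased with negated existentials, and the converse direction you sketch is an argument by contradiction which only delivers $\neg\neg(B = A)$ for a closed $B \subseteq A$, not $B = A$.

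The paper sidesteps all of this with one move you are missing: apply NID not on $X$ but on $X \cup \{ * \}$ for a fresh point $*$ (the rules, living on $X$, are unaffected, and $F - \{*\}$ is closed in $X$ whenever $F$ is closed in $X \cup \{*\}$). Every ${\cal R}$-closed $A \subseteq X$ corresponds to the ${\cal R}$-closed subset $A \cup \{ * \}$ of $X \cup \{ * \}$, which is inhabited by construction, so set-generation there yields $F$ with $* \in F \subseteq A \cup \{ * \}$, and ${\cal F} = \{ F - \{ * \} \, : \, F \in {\cal G}, * \in F \}$ is full with no case distinction on inhabitation or on the status of $\emptyset$. With fullness in hand, minimality becomes the \emph{positive} bounded condition $(\forall F \in {\cal F})(F \subseteq A \Rightarrow F = A)$ on elements $A$ of ${\cal F}$: given a closed $B \subseteq A$, fullness supplies $F \in {\cal F}$ with $F \subseteq B \subseteq A$, whence $F = A$ and $B = A$. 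I would rework your proof around this device rather than around a case split on the empty set.
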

\begin{proof}
Suppose ${\cal R}$ is a non-deterministic inductive definition on a set $X$. If * is an element not in $X$, then ${\cal R}$ can also be considered as a non-deterministic inductive definition on $X \cup \{ * \}$. Assume that ${\cal G}$ generates for this non-deterministic inductive definition on $X \cup \{ * \}$. Then let
\[ {\cal F} = \{ F - \{ * \} \, : \, F \in {\cal G}, * \in F \} \subseteq {\rm Clos}_{\cal R}(X). \]
The set ${\cal F}$ is full, because if $A$ is an ${\cal R}$-closed subset of $X$, then $A \cup \{ * \}$ is an ${\cal R}$-closed subset of $X \cup \{ * \}$. So there is an element $F \in {\cal G}$ with $* \in F$ and $F \subseteq A \cup \{ * \}$. Hence $A \supseteq F - \{ * \} \in {\cal F}$.

Because ${\cal F}$ is full,
\[ {\cal M} = \{ A \in {\cal F} \, : \, (\forall F \in {\cal F}) \, (F \subseteq A \Rightarrow F = A) \} \]
is the collection of all minimal ${\cal R}$-closed sets, which is a set by bounded separation.
\end{proof}

\begin{defi}{polynomial}
Let $f: B \to A$ be a function. The \emph{polynomial functor} $P_f$ associated to $f$ is defined on sets as
\[ P_f(X) = \{ (a \in A, t: f^{-1}(a) \to X) \}, \]
and on functions as
\[ P_f(\alpha)(a, t) = (a, \alpha t). \]

A \emph{$P_f$-algebra} is a set $X$ together with function $t: P_f(X) \to X$. A \emph{morphism of $P_f$-algebras} $ (X, s) \to (Y, t)$ is a function $\alpha: X \to Y$ such that the following diagram commutes:
\diag{ P_f(X) \ar[d]_s \ar[r]^{P_f(\alpha)} & P_f(Y) \ar[d]^t \\
X \ar[r]_{\alpha} & Y. }
The initial $P_f$-algebra, if it exists, is the \emph{W-type} associated to $f$.  The dual notions are that of a  \emph{$P_f$-coalgebra} and an \emph{M-type} associated to $f$. 
\end{defi}

In an impredicative metatheory such as {\bf IZF} one can prove the existence of W- and M-types. Indeed, if $f: B \to A$ is a function, then the M-type can be constructed as the collection of trees, with nodes labelled by elements $a \in A$ and edges labelled by elements $b \in B$, in such a way that $f^{-1}(a)$ enumerates the edges pointing towards a node labelled by $a \in A$. The following picture hopefully conveys the idea:
\begin{displaymath}
\xymatrix@C=.75pc@R=.5pc{ & & \ldots & & & \ldots & \ldots & \ldots \\
           & & {\bullet} \ar@{-}[dr]_u & & a \ar@{-}[dl]^v & {\bullet} \ar@{-}[d]_x & {\bullet} \ar@{-}[dl]_y & {\bullet} \ar@{-}[dll]^z \\
*{\begin{array}{rcl}
f^{-1}(a) & = & \emptyset \\
f^{-1}(b) & = & \{ u, v \} \\
f^{-1}(c) & = & \{ x, y, z \} \\
& \ldots & 
\end{array}}       & a \ar@{-}[drr]_x & & b \ar@{-}[d]_y & & c \ar@{-}[dll]^z & & \\
           & & & c & & & & } 
\end{displaymath}
If $M_f$ is this collection of trees, the coalgebra morphism $u: M_f \to P_f(M_f)$ takes a tree and sends it to the pair $(a, t)$, where $a$ is the label of the root of the tree and $t$ is the function sending an element $b \in f^{-1}(a)$ to the tree attached to the unique edge into the root with label $b$.

The W-type $W_f$ associated to $f$ consists of those trees in $M_f$ that are well-founded. In that case, the algebra morphism ${\rm sup}: P_f(W_f) \to W_f$ is the operation which takes an element $a \in A$ and a function $t: f^{-1}(a) \to W_f$ and creates the tree whose root is labelled by $a$ and with the tree $t(b)$ attached to the edge into the root with label $b \in f^{-1}(a)$. (For more on W- and M-types, see \cite{moerdijkpalmgren00, bergdemarchi07}.)

As said, both objects can be constructed inside {\bf IZF}: the M-type can be built by regarding trees as suitable collections of paths $\langle a_0, b_0, a_1, b_1, a_2, \ldots, a_n \rangle$ with $a_i \in A, b_i \in B$ and $f(b_i) = a_i$ for all $i \lt n$. The W-type can then be built by selecting the trees in the M-type that are well-founded, or as the least $P_f$-subalgebra of $u^{-1}: P_f(M_f) \to M_f$. As these constructions make use of the power set axiom, it is not at all clear whether M- and W-types can be shown to exist within {\bf CZF}. But with the NID principle we can, as we will now show.

\begin{theo}{elemNIDandMtypes}
Elementary NID implies that all M-types exist.
\end{theo}
\begin{proof}
We try to mimick the impredicative construction explained above.

Let $f: B \to A$ be a function and let $P$ be the collection of paths of odd length of the form $\langle a_0, b_0, a_1, b_1, a_2, \ldots, a_n \rangle$ such that
\begin{enumerate}
\item every $a_i$ belongs to $A$,
\item every $b_i$ belongs to $B$ and
\item for every $i \lt n$, we have $f(b_i) = a_i$.
\end{enumerate}

Consider the following non-deterministic inductive definition ${\cal R}$ on $P$:
\begin{displaymath}
\begin{array}{ccc}
\infer{ \{ \langle a \rangle \, : \, a \in A \} }{} & &
\infer[ b \in f^{-1}(a_n)]{ \{ \sigma * \langle b, a \rangle \, : a \in A \} }{ \{ \sigma = \langle a_0, b_0, \ldots, a_n \rangle \} }
\end{array}
\end{displaymath}
Let ${\cal M}$ be the collection of those ${\cal R}$-closed subsets $m \subseteq P$ such that
\begin{enumerate}
\item there is a unique $a \in A$ such that $\langle a \rangle \in m$,
\item $(\forall \sigma =\langle a_0, b_0, \ldots, a_n \rangle \in m) \, (\forall b \in f^{-1}(a_n)) \, (\exists ! a \in A) \, \sigma * \langle b, a \rangle \in m$,
\item $m$ is closed under initial segments.
\end{enumerate}
Elementary NID can used to justify the claim that ${\cal M}$ is a set, as follows. Note that every such $m \in {\cal M}$ must be minimal: if $A \subseteq m$ is also ${\cal R}$-closed, then one proves
\[ (\forall \sigma \in P) \, \sigma \in m \Rightarrow \sigma \in A \]
by induction on the length of $\sigma$. Elementary NID implies that the minimal ${\cal R}$-closed subsets of $X$ form a set (see \refprop{minimalinNID}), so ${\cal M}$ is a set by bounded separation.

Claim: ${\cal M}$ is a $P_f$-coalgebra. Proof: Let $m \in {\cal M}$. By 1, there is a unique element $a \in A$ such that $< a > \in A$. Then define for every $b \in B_a$,
\[ t(b) = \{ \sigma \in P \, : \, <a, b> * \sigma \in m \}. \]
Obviously, $t(b) \in {\cal M}$, so we have defined an operation $u: {\cal M} \to P_f({\cal M})$.

Claim: ${\cal M}$ is the final $P_f$-coalgebra. Proof: let $v: X \to P_f(X)$ be a $P_f$-coalgebra and let $v_1: X \to A$ be composition of $v$ with the projection on the first coordinate. By a path in $X$ we mean a sequence $\langle x_0, b_0, x_1, \ldots, x_n \rangle$ such that
\begin{enumerate}
\item every $x_i$ belongs to $X$,
\item every $b_i$ belongs to $B$ and
\item for every $i \lt n$, if $v(x_i) = (a_i, t_i)$, then $f(b_i) = a_i$ and $x_{i+1} = t(b_i)$.
\end{enumerate}
Then define $\hat{v}: X \to {\cal M}$ by saying that
\begin{eqnarray*}
\hat{v}(x) & = & \{ \langle a_0, b_1, a_1, \ldots, a_n \rangle \in P \, : \, \mbox{ there is a path } \langle x_0, b_1, x_1, \ldots, x_n \rangle \mbox{ in } X, \\
& & \mbox{ with } x = x_0 \mbox{ and } v_1(x_i) = a_i \mbox{ for all } i \lt n \}.
\end{eqnarray*}
This is clearly well-defined and the unique $P_f$-coalgebra morphism from $X$ to ${\cal M}$.
\end{proof}

Full NID can be used to prove that all W-types exist. This will follow from the previous result together with:

\begin{prop}{NIDimpliessetcompactness}
Let ${\cal R}$ be a deterministic inductive definition on a set $X$. Then NID implies that the ${\cal R}$-closed subsets of $X$ are set-generated and there is a least ${\cal R}$-closed subset of $X$.
\end{prop}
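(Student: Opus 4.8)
The first assertion is immediate: it is simply the NID principle applied to the set $X$ and the set of rules $\mathcal{R}$, since a deterministic $\mathcal{R}$ is just a special case. So the content of the proposition lies in producing a \emph{least} $\mathcal{R}$-closed subset as a genuine set. The one feature of deterministic definitions that I would exploit throughout is that arbitrary intersections of $\mathcal{R}$-closed sets are again $\mathcal{R}$-closed: if each $Y_i$ is closed and $(a,\{c\})$ is a rule with $a \subseteq \bigcap_i Y_i$, then $a \subseteq Y_i$ for every $i$, hence $c \in Y_i$ for every $i$, hence $c \in \bigcap_i Y_i$. (This is exactly the step that breaks down for genuinely non-deterministic rules, where $b \between Y_i$ for all $i$ does not give $b \between \bigcap_i Y_i$.) In an impredicative setting this would instantly yield the least closed set as $\bigcap \mathrm{Clos}_{\mathcal{R}}(X)$; the whole difficulty is that this intersection ranges over a proper class and so is out of reach of Bounded Separation.

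My plan is to replace the class-sized intersection by a set-sized one using \refprop{minimalinNID}. Applied to the deterministic $\mathcal{R}$, that proposition provides a \emph{set} $\mathcal{F}$ of $\mathcal{R}$-closed subsets of $X$ which is full in the sense actually established in its proof: every $\mathcal{R}$-closed subset $\gamma \subseteq X$ contains some $E \in \mathcal{F}$. I would then simply set
\[ I \;=\; \bigcap \mathcal{F} \;=\; \{ x \in X \,:\, (\forall E \in \mathcal{F}) \, x \in E \}, \]
which is a set by Bounded Separation because $\mathcal{F}$ is a set. By the intersection property above, $I$ is $\mathcal{R}$-closed. And $I$ is least: given any $\mathcal{R}$-closed $\gamma$, fullness supplies an $E \in \mathcal{F}$ with $E \subseteq \gamma$, whence $I = \bigcap \mathcal{F} \subseteq E \subseteq \gamma$. (The set $\mathcal{F}$ is inhabited, since $X$ itself is closed and must therefore contain a member of $\mathcal{F}$, so $I$ is a bona fide intersection of closed sets contained in $X$.)

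The main obstacle, and the place where NID does the real work, is precisely the passage from the proper class $\mathrm{Clos}_{\mathcal{R}}(X)$ to the set $\mathcal{F}$. It is worth flagging a tempting shortcut that fails: one might hope to take $I = \bigcap G$ for the generating set $G$ furnished directly by NID. This does not work, because the generators in $G$ can be too large; for instance, when $\mathcal{R}$ has no axioms, $\emptyset$ is closed and is the least closed set, yet $G$ need not contain $\emptyset$, and $\bigcap G$ can be strictly larger than $\emptyset$. What one genuinely needs is a set of closed sets that is cofinal \emph{downward}, and it is exactly this that the added-point construction in \refprop{minimalinNID} delivers; the determinism of $\mathcal{R}$ then guarantees that intersecting this set keeps us inside $\mathrm{Clos}_{\mathcal{R}}(X)$ and lands us at the bottom.
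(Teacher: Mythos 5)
Your proof is correct and follows essentially the same route as the paper's: both rest on the observation that, for a deterministic $\mathcal{R}$, inhabited set-indexed intersections of closed sets are again closed, and both use the added-point trick to produce a \emph{set} of closed sets that is downward cofinal in ${\rm Clos}_{\mathcal{R}}(X)$, whose intersection is then the least closed set. The only difference is packaging: you invoke \refprop{minimalinNID} to obtain the cofinal set $\mathcal{F}$, whereas the paper reruns the trick directly by adjoining the rule $(\emptyset, \{*\})$ and intersecting the resulting generating set $\mathcal{G}_*$.
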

\begin{proof}
Suppose ${\cal R}$ is a \emph{deterministic} inductive definition on a set $X$. Clearly, NID implies that the ${\cal R}$-closed subsets of $X$ are set-generated. Suppose * is an element not in $X$ and write $X_* = X \cup \{ * \}$ and ${\cal R}_* = {\cal R} \cup \{ (\emptyset, \{*\} ) \}$. The ${\cal R}_*$-closed subsets of $X_*$ are closed under small intersections, so if ${\cal G}_*$ generates the ${\cal R}_*$-closed subsets of $X_*$, then $\bigcap {\cal G}_*$ is the least ${\cal R}_*$-closed subset of $X_*$. For if $A$ is ${\cal R}_*$-closed, then $* \in A$ and therefore there is a $G \in {\cal G}_*$ such that $* \in G$ and $G \subseteq A$. Then $\bigcap {\cal G}_* \subseteq G \subseteq A$. Now $M = \bigcap {\cal G}_* - \{ * \}$ is the least ${\cal R}$-closed subset of $X$. For if $A \subseteq X$ is ${\cal R}$-closed, then $A \cup \{ * \} \subseteq X_*$ is ${\cal R}_*$-closed, and therefore $\bigcap {\cal G}_* \subseteq A \cup \{ * \}$.
\end{proof}

\begin{theo}{NIDandWtypes}
NID implies that all W-types exist.
\end{theo}
\begin{proof}
Again, we try to mimick the impredicative construction explained above.

Let $f: B \to A$ be a function. We have shown in \reftheo{elemNIDandMtypes} that elementary NID implies that the corresponding M-type $u: {\cal M} \to P_f({\cal M})$ exists. By Lambek's lemma, $u$ is an isomorphism, so has an inverse, which we will call ${\rm sup}$. Then consider the following \emph{deterministic} inductive definition on ${\cal M}$:
\begin{displaymath}
\infer[a \in A, t: B_a \to {\cal M}]{ \{ {\rm sup}_a t \} }{ \{ t(b) \, : \, b \in B_a \} }
\end{displaymath}
By \refprop{NIDimpliessetcompactness}, this inductive definition has a least fixed point. This least fixed point is the W-type associated to $f$.
\end{proof}

\begin{rema}{onsetcomp}
We have shown that NID implies that for every (deterministic) inductive definition $\Phi$ on a set $X$ and every subset $A$ of $X$ the class $I(\Phi, A)$ (the least $\Phi$-closed subclass of $X$ containing $A$) is actually a set. What it does not seem to imply (at least, we failed to show that it does) is Aczel's Set Compactness Theorem \cite{aczelrathjen01}: this is the statement that there is a set $B$ of subsets of $X$ such that, whenever $a \in I(\Phi, A)$ there is a $Y \in B$ such that $Y \subseteq A$ and $a \in I(\Phi, Y)$.
\end{rema}

\section{NID and logic}

For developing the applications of the NID principle to formal topology it will be convenient to reformulate this principle using concepts from logic.

\subsection{Propositional case}

In this section we will identify models of a propositional theory (over a set of propositional letters $P$) with subsets of $P$ (where the elements that belong to the subset are precisely those which are true in the model).

\begin{defi}{gamelogic}
Let $P$ be a collection of propositional letters. A \emph{game formula} (over $P$) is a formula in propositional logic built using propositional letters from $P$ and infinitary disjunctions and conjunctions (but no implications and negations). A \emph{game sequent} (over $P$) is a formula of the form $\varphi \to \psi$ where $\varphi$ and $\varphi$ are game formulae. A \emph{game theory} (over $P$) is a set of game sequents.

An \emph{elementary game formula} is a game formula in which no infinite conjuctions occur. A game formula in which all conjunctions are finite is called a \emph{finitary game formula}. An \emph{elementary (finitary) game sequent} is a game sequent $\varphi \to \psi$ in which the hypothesis $\varphi$ is elementary (finitary). A collection of elementary (finitary) game sequents is called an \emph{elementary (finitary) game theory}.
\end{defi}

\begin{theo}{nidtogamelogic}
Full NID implies that class of models of an (elementary, finitary) game theory $T$ over a set of propositional letters $P$ is set-generated. The same statements holds for elementary (finitary) NID and models of elementary (finitary) game theories.
\end{theo}
\begin{proof}
Let $T$ be a game theory over a set of propositional letters $P$ and let $S$ be the union of $P$ with the collection of subformulae of $T$. Write $S'$ for the collection of all (or all elementary, or all finitary) game formulas in $S$. We consider the following non-deterministic inductive definition on $S$:
\begin{displaymath}
\begin{array}{ccc}
\infer[\bigwedge_{i \in I} \varphi_i \in S, i_0 \in I]{ \{ \varphi_{i_0} \} }{ \{ \bigwedge_{i \in I} \varphi_i \} } & &
\infer[\bigvee_{i \in I} \varphi_i \in S]{ \{ \varphi_i \, : \, I \in I \} }{ \{ \bigvee_{i \in I} \varphi_i \} } \\ \\
\infer[\bigwedge_{i \in I} \varphi_i \in S']{ \{ \bigwedge_{i \in I} \varphi_i \} }{ \{ \varphi_i \, : \, i \in I\} } & &
\infer[\bigvee_{i \in I} \varphi_i \in S, i_0 \in I]{ \{ \bigvee_{i \in I} \varphi_i \} }{ \{ \varphi_{i_0} \} } \\ \\
& \infer[\varphi \to \psi \in T]{ \{ \psi \} }{ \{ \varphi \}}
\end{array}
\end{displaymath}
Clearly, if $M$ is a model of $T$, then $\{ \varphi \in S \, : \, M \models \varphi \}$ is a closed subset of $S$.

Conversely, if $X$ is a closed subset of $S$, write $M = \{ p \in P \, : \, p \in X \}$. Now one proves by induction on the build-up of the game formula $\varphi$ that (1) if $\varphi \in S'$ and $M \models \varphi$, then $\varphi \in X$, and that (2) if $\varphi \in S$ and $\varphi \in X$, then $M \models \varphi$. So $M$ is model of $T$, because if $\varphi \to \psi \in T$, then
\[ M \models \varphi \Rightarrow \varphi \in X \Rightarrow \psi \in X \Rightarrow M \models \psi. \]

So if ${\cal G}$ is a generating set for the non-deterministic inductive definition, then $\{ \{ p \in P \, : \, p \in X \} : \, X \in {\cal G} \}$ generates the class of models of $T$.
\end{proof}

\begin{coro}{fullNIDprop1stcorr}
Full NID implies that the minimal models of a game theory $T$ over a set of propositional letters $P$ form a set. The same statements holds for elementary (finitary) NID and models of elementary (finitary) game theories.
\end{coro}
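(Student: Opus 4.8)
The plan is to deduce this from \reftheo{nidtogamelogic} by exactly the device used to pass from full NID to \refprop{minimalinNID}: from a set that generates the class of models I extract a \emph{full} set ${\cal F}$ of models --- one with the property that every model of $T$ contains some member of ${\cal F}$ --- after which the minimal models are carved out by bounded separation.

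First I would adjoin a fresh propositional letter $*$, forming $P_* = P \cup \{*\}$, and regard $T$ as a game theory over $P_*$. Since $*$ does not occur in $T$, a subset $M \subseteq P_*$ is a model of $T$ over $P_*$ if and only if $M \cap P$ is a model over $P$; in particular $*$ may be freely added to or deleted from any model, so whenever $A$ is a model over $P$ the set $A \cup \{*\}$ is a model over $P_*$. Note that merely enlarging the set of letters leaves $T$ itself (and hence its elementary or finitary character) untouched, so the elementary and finitary variants are covered by the same argument. By \reftheo{nidtogamelogic}, in the appropriate variant, there is then a set ${\cal G}_*$ generating the class of models of $T$ over $P_*$.

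Next I would set ${\cal F} = \{ F - \{*\} : F \in {\cal G}_*, \, * \in F \}$, a set by separation, and check that it is a full set of models of $T$ over $P$. Each $F \in {\cal G}_*$ is a model over $P_*$, so $F - \{*\} = F \cap P$ is a model over $P$, giving ${\cal F} \subseteq {\rm Mod}(T)$. For fullness, given a model $A$ over $P$ the set $A \cup \{*\}$ is a model over $P_*$ containing $*$; set-generatedness then yields $F \in {\cal G}_*$ with $* \in F \subseteq A \cup \{*\}$, whence $F - \{*\} \in {\cal F}$ and $F - \{*\} \subseteq A$. This is the one step carrying real content, and it is precisely where the marker $*$ does its work: it guarantees that the enlarged model $A \cup \{*\}$ is inhabited at $*$, so that the generating set is forced to supply a witness lying below $A$.

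Finally, with ${\cal F}$ full I would put
\[ {\cal M} = \{ A \in {\cal F} : (\forall F \in {\cal F})(F \subseteq A \Rightarrow F = A) \}, \]
a set by bounded separation, and argue as in \refprop{minimalinNID} that ${\cal M}$ is exactly the collection of minimal models. Any minimal model $C$ contains, by fullness, some $F \in {\cal F}$; since $F$ is itself a model below $C$, minimality forces $F = C$, so $C \in {\cal F}$ and hence $C \in {\cal M}$. Conversely, if $A \in {\cal M}$ and $D$ is any model with $D \subseteq A$, fullness provides $F \in {\cal F}$ with $F \subseteq D \subseteq A$, and minimality within ${\cal F}$ gives $F = A$, forcing $D = A$. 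Both directions are constructively valid, so ${\cal M}$ is the set of minimal models. The main obstacle is thus not any single calculation but getting the bookkeeping around the marker $*$ right; everything else parallels \refprop{minimalinNID}.
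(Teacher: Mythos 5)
Your proof is correct and follows exactly the route the paper intends for this corollary: apply \reftheo{nidtogamelogic} to the theory over $P \cup \{*\}$ and then transplant the marker-element argument of \refprop{minimalinNID} to extract a full set of models and carve out the minimal ones by bounded separation. The bookkeeping around $*$ (models over $P_*$ restrict to models over $P$, and $*$ forces the generating set to produce a witness below a given model) is handled correctly, so nothing further is needed.
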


\begin{coro}{fullNIDprop2ndcorr}
The statement that for every set of propositional letters and game theory $T$ the class of models is set-generated is equivalent to full NID. The same statements holds for elementary (finitary) NID and models of elementary (finitary) game theories.
\end{coro}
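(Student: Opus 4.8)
The forward implication—that full NID (respectively elementary, finitary NID) implies that the class of models of a game theory (respectively elementary, finitary game theory) over a set of propositional letters is set-generated—is exactly \reftheo{nidtogamelogic}. So the plan is to establish the converse in each case: assuming that the class of models of every (elementary, finitary) game theory over a set of propositional letters is set-generated, I will derive full (elementary, finitary) NID.

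First I would translate an arbitrary non-deterministic inductive definition back into a game theory. Given a set $X$ and a set of rules ${\cal R}$ on $X$, take the propositional letters to be the elements of $X$ themselves, so $P = X$, and to each rule $(a, b) \in {\cal R}$ associate the game sequent $\bigwedge_{x \in a} x \to \bigvee_{y \in b} y$. Collecting these sequents yields a game theory $T$ over $P$. The key observation is that, identifying a subset $Y \subseteq X$ with the model $M$ whose true letters are exactly the elements of $Y$, we have $M \models \bigwedge_{x \in a} x \to \bigvee_{y \in b} y$ if and only if $a \subseteq Y \Rightarrow b \between Y$, i.e.\ if and only if $Y$ is closed under the rule $(a, b)$. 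Hence the models of $T$ are precisely the ${\cal R}$-closed subsets of $X$. Since $P = X$, the models are literally subsets of $X$, so any generating set ${\cal G}$ for the class of models of $T$ is simultaneously a generating set for ${\rm Clos}_{\cal R}(X)$; this is exactly the NID principle for $(X, {\cal R})$.

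To obtain the elementary and finitary variants I would check that this translation respects the restricted forms. If $(a, b)$ is elementary, then $a$ is a singleton and the hypothesis $\bigwedge_{x \in a} x$ reduces to a single propositional letter, so the associated sequent is elementary; if $(a, b)$ is finitary, then $a$ is finite and $\bigwedge_{x \in a} x$ is a finite conjunction, so the associated sequent is finitary. Thus an elementary (finitary) inductive definition produces an elementary (finitary) game theory, and the assumed set-generation for such theories delivers elementary (finitary) NID.

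The main obstacle here is bookkeeping rather than a genuine difficulty: one must verify carefully the biconditional relating satisfaction of the sequent to closure under the corresponding rule, and confirm that the elementary/finitary classification of the hypotheses matches the definition of elementary and finitary game sequents. Once these routine verifications are in place, together with \reftheo{nidtogamelogic} for the forward direction, the claimed equivalences follow in all three cases.
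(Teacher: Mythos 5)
Your proposal is correct and follows essentially the same route as the paper: both directions are handled identically, with the forward implication delegated to \reftheo{nidtogamelogic} and the converse obtained by translating each rule $(a,b)$ into the sequent $\bigwedge a \to \bigvee b$ over the propositional letters $P = X$, so that models coincide with ${\cal R}$-closed subsets. Your explicit check that elementary (finitary) rules yield elementary (finitary) sequents is a routine verification the paper leaves implicit.
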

\begin{proof}
In view of \reftheo{nidtogamelogic}, it suffices to show that the statement implies full NID. So suppose $X$ is a set and $\Phi \subseteq {\rm Pow}(X) \times {\rm Pow}(X)$ is a non-deterministic inductive definition and consider the following propositional theory over $X$:
\[ \{ \, \bigwedge \alpha \to \bigvee \beta \, : \, (\alpha, \beta) \in \Phi \, \}. \]
A model of this theory is the same as a $\Phi$-closed subset of $X$, so the result follows.
\end{proof}

\subsection{First-order case}

In this subsection we extend the notion of game sequent to first-order logic:

\begin{defi}{gamelogic1storder}
A \emph{game formula} (over some signature $\Sigma$) is a formula built from atomic formulas of the form $R(t_1, \ldots, t_n)$ with $R \in \Sigma$ (but no equalities) using infinitary disjunctions and conjunctions and existential and universal quantification (but no implications or negations). A \emph{game sequent} (over $\Sigma$) is the (universal closure) of a formula of the form $\varphi \to \psi$ where $\varphi$ and $\varphi$ are game formulae. A \emph{game theory} (over $\Sigma$) is a set of game sequents.
\end{defi}

Let $\Sigma$ be a signature and ${\cal R}$ a set of relation symbols not occuring in $\Sigma$. We will write $\Sigma'= \Sigma \cup {\cal R}$. Assume moreover that $M$ is a model in the signature $\Sigma$. As usual, we will regard $M'$ as a \emph{$\Sigma'$-expansion} of $M$, if $M'$ is a $\Sigma'$-model and $M' \upharpoonright \Sigma = M$ (where the latter means that $M$ and $M'$ have the same underlying set and the interpretation of the symbols belonging to $\Sigma$ in $M'$ coincides with their interpretation in $M$). If ${\cal M}$ is a collection of $\Sigma'$-expansions of $M$, we will call the $\Sigma'$-expansion $M_0$ with
\[ R^{M_0} = \bigcup_{M'\in {\cal M}} R^{M'} \]
for all $R \in {\cal R}$ the \emph{union} of the family ${\cal M}$.

\begin{theo}{fullNIDinpredcase}
Full NID is equivalent to the statement:
\begin{quote}
Suppose $\Sigma$ and $\Sigma'$ are two signatures as above, $M$ is a $\Sigma$-model and $T$ is a game theory over $\Sigma'$. Then there is a set ${\cal M}$ consisting of $\Sigma'$-expansions of $M$ which model $T$ such that any $\Sigma'$-expansion of $M$ modelling $T$ can be obtained as a union of elements from ${\cal M}$.
\end{quote}
\end{theo}
\begin{proof} (Sketch.)
We really only have to prove that full NID implies the statement, which we do by reducing it to the propositional case.

First of all, we introduce constants for all elements in $M$ to obtain an extension $\Sigma''$ of the signature $\Sigma'$. These constants we can then use to eliminate existential quantifiers in $T$ in favour of infinite disjunctions and universal quantifiers in favour of conjunctions. Writing $P$ for the collection of atomic sentences in the signature $\Sigma''$, we now have a propositional game theory $T'$ over $P$. Adding to $T'$ all atomic sentences true in $M$ we get a theory $T''$ whose models are really $\Sigma'$-structures $M'$ which model $T$ and are such that
\[ R^M \subseteq R^{M'}  \]
for all relation symbols $R$ in $\Sigma$. Then, if ${\cal M}$ generates the class of such models,
\[ {\cal M}'= \{ M_0 \in {\cal M} \, : \, R^M = R^{M_0} \mbox{ for all } R \in \Sigma \} \]
generates the class of $\Sigma'$-expansions modelling $T$.
\end{proof}

\begin{coro}{fullNIDcoro1storder}
Suppose $\Sigma$ and $\Sigma'$ are two signatures as above, $M$ is a $\Sigma$-model and $T$ is a game theory over $\Sigma'$. Call a $\Sigma'$-expansion $M'$ of $M$ \emph{minimal}, if for every other $\Sigma'$-expansion $M''$ on $M$ for which we have
\[ R^{M''} \subseteq R^{M'} \]
for all $R \in \Sigma'$, we actually have $R^{M''} = R^{M'}$ for all $R \in \Sigma'$. Then full NID implies that the collection of minimal $\Sigma'$-expansions of $M$ forms a set.
\end{coro}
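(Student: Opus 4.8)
The plan is to reduce the statement to its propositional counterpart, \refcoro{fullNIDprop1stcorr}, by running the same propositionalisation that is used in the proof of \reftheo{fullNIDinpredcase}. (Here I read minimality, as the statement surely intends, among the $\Sigma'$-expansions of $M$ that \emph{model} $T$; without that restriction the notion degenerates.) Concretely, I would first adjoin a constant $c_a$ for every element $a$ of $M$, obtaining the enlarged signature $\Sigma''$, and use these constants to replace each existential quantifier in $T$ by an infinite disjunction and each universal quantifier by an infinite conjunction. Writing $P$ for the set of atomic sentences over $\Sigma''$, this turns $T$ into a propositional game theory $T'$ over $P$. Adjoining to $T'$ all atomic sentences $R(c_{a_1}, \ldots, c_{a_n})$ with $R \in \Sigma$ and $(a_1, \ldots, a_n) \in R^M$ yields a theory $T''$ whose models, identified with subsets of $P$, are exactly the $\Sigma'$-structures $M'$ on the domain of $M$ that model $T$ and satisfy $R^M \subseteq R^{M'}$ for all $R \in \Sigma$.

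Under this identification the inclusion order on expansions matches the inclusion order on subsets of $P$, so one can hope to read off minimal expansions from minimal models of $T''$. The crux is to reconcile the two notions of minimality: a minimal $\Sigma'$-expansion minimises only the $\mathcal{R}$-part while keeping the $\Sigma$-part fixed at $R^M$, whereas a minimal model of $T''$ minimises the whole set of true atomic sentences subject only to the lower bound $R^M$. I would prove that the minimal $\Sigma'$-expansions of $M$ modelling $T$ are precisely those minimal models $N$ of $T''$ whose $\Sigma$-part is exactly $R^M$. Indeed, if $N$ is a minimal model of $T''$ with $\Sigma$-part $R^M$ it is a genuine expansion, and an expansion modelling $T$ strictly below it would again be a model of $T''$ strictly below $N$, contradicting minimality. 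Conversely, if $N$ is a minimal expansion and $N' \subsetneq N$ is a model of $T''$, then the $\Sigma$-part of $N'$ lies between $R^M$ (since $N'$ models $T''$) and $R^M$ (the $\Sigma$-part of $N$), hence equals $R^M$, so $N'$ is an expansion strictly below $N$ — impossible.

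By \refcoro{fullNIDprop1stcorr}, full NID implies that the minimal models of $T''$ form a set $\mathcal{N}$. The condition ``$N$ has $\Sigma$-part exactly $R^M$'' is bounded, so bounded separation makes
\[ \{\, N \in \mathcal{N} \, : \, (\forall R \in \Sigma)\,(\forall a_1, \ldots, a_n)\, (\, R(c_{a_1}, \ldots, c_{a_n}) \in N \Leftrightarrow (a_1, \ldots, a_n) \in R^M \,) \,\} \]
a set, and by the previous paragraph this is exactly the collection of minimal $\Sigma'$-expansions of $M$ modelling $T$.

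The main obstacle I anticipate is exactly this reconciliation of the two minimality notions, together with the degenerate case in which $T$ already forces some $\Sigma$-atomic sentence not holding in $M$: then no model of $T''$ has $\Sigma$-part equal to $R^M$, the separated set is empty, and indeed there are no $\Sigma'$-expansions of $M$ modelling $T$ at all, so the conclusion still holds vacuously. An alternative route, bypassing the propositional reduction, would be to apply \reftheo{fullNIDinpredcase} directly to obtain a generating set for the class of expansions and then imitate the fresh-element argument of \refprop{minimalinNID}; the delicate point there is that set-generatedness must be massaged into a set lying \emph{below} every expansion, so as to capture a possibly empty least expansion, which is precisely what adjoining the fresh element $*$ accomplishes in \refprop{minimalinNID}.
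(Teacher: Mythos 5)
Your proof is correct and follows the route the paper intends: the corollary is stated without an explicit proof, and the natural argument is precisely your reduction through the propositionalisation used in \reftheo{fullNIDinpredcase} down to \refcoro{fullNIDprop1stcorr}. The reconciliation of the two minimality notions (minimal $\Sigma'$-expansion modelling $T$ versus minimal model of $T''$ with $\Sigma$-part exactly $R^M$) and the final bounded-separation step are exactly the details that need supplying, and you supply them correctly.
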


\begin{exam}{linearorders}
To illustrate the usefulness of the last result, consider the following example. Let $(\mathbb{P}, \leq)$ be a partial order. Note that the structure of a linear order on $\mathbb{P}$ extending $\leq$ is the same thing as a $( \leq, \sim, \unlhd )$-expansion of $(\mathbb{P}, \leq)$, which models the set of finitary game sequents
\begin{eqnarray*}
& & p \sim p \\
p \sim q & \to & q \sim p \\
p \sim q \land q \sim r & \to & p \sim r \\
p \leq q & \to & p \unlhd q \\
& & p \unlhd p \\
p \unlhd q \land q \unlhd p & \to & p \sim q \\
p \unlhd q \land q \unlhd r & \to & p \unlhd r \\
p \sim q \land p' \sim q' \land p \unlhd q & \to & p' \unlhd q'
\end{eqnarray*}
and for which $\sim$ coincides with the equality on $\mathbb{P}$. Since every such model is automatically minimal (in the sense of the previous corollary) by linearity, we see that finitary NID implies that the collection of linear order structures on $\mathbb{P}$ extending $\leq$ forms a set.
\end{exam}

\section{Applications to formal topology}

In this section we will illustrate the power of non-deterministic inductive definitions by applying them to formal topology. This is not the place, however, to recap on the development of formal topology in the context of {\bf CZF} (for that, see \cite{aczel06} and \cite{bergmoerdijk10c}). Therefore this section will not be self-contained and will presuppose some familiarity with the basic notions of formal topology.

\reftheo{pointsandfinitaryNID} and \reftheo{elemNIDandcoeqalizers} below were originally proved by Erik Palmgren in the context of type theory (see \cite{palmgren05, palmgren06}). He used regular universes; we prove these results using elementary and finitary NID, showing that the regular universes can be avoided (see \refcoro{RDCimpliesSGA} below).

Recall that a \emph{point} of a formal space $(\mathbb{P},  \mbox{Cov})$ is an inhabited subset $\alpha \subseteq \mathbb{P}$ such that
\begin{enumerate}
\item[(1)] $\alpha$ is upwards closed,
\item[(2)] $\alpha$ is downwards directed,
\item[(3)] if $S \in \mbox{Cov}(a)$ and $a \in \alpha$, then $S \cap \alpha$ is inhabited.
\end{enumerate}

\begin{theo}{pointsandfinitaryNID}
Finitary NID implies that the collection of points of a set-presented formal space is set-generated.
\end{theo}
\begin{proof}
Suppose ${\rm BCov}$ is a presentation for the formal space $(\mathbb{P},  \mbox{Cov})$. In that case a point of the formal space $(\mathbb{P},  \mbox{Cov})$ is nothing but a closed set for the following non-deterministic inductive definition:
\begin{displaymath}
\begin{array}{ccc}
\infer[p \leq q]{ \{ q \} }{ \{ p \} } & \infer[p, q \in \mathbb{P}]{ \{ r \in \mathbb{P} \, : \, r \leq p, r \leq q \} }{ \{ p, q \} } &
\infer[ S \in {\rm BCov}(p)]{ \downarrow S }{ \{ a \} },
\end{array}
\end{displaymath}
where $\downarrow S = \{ r \in \mathbb{P} \, : \, (\exists s \in S) \, r \leq s \}$. Since this non-deterministic inductive definition is finitary, the result follows.
\end{proof}

Call a formal space \emph{flat}, if all its points are minimal with respect to the inclusion ordering (note that this is equivalent to saying that all its points are maximal). Sambin has shown that all regular formal spaces are flat (see \cite{sambin03}).

\begin{coro}{flatformsphavesetofpoints}
Finitary NID implies that flat, set-presented formal spaces have a set of points.
\end{coro}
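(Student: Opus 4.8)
The plan is to read the result off directly from \reftheo{pointsandfinitaryNID}, using flatness to collapse the generating family onto the points themselves. That theorem already supplies, under finitary NID, a subset $G$ of the class $M$ of points of the set-presented space $(\mathbb{P}, \mbox{Cov})$ which generates $M$ in the sense of set-generation; the only additional work is to show that, when the space is flat, every point in fact lies in $G$, so that $M = G$ and hence $M$ is a set.

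First I would fix an arbitrary point $\alpha \in M$. Since a point is by definition inhabited, I can choose some $a \in \alpha$ and then apply the second (pointwise) formulation of set-generation to obtain an element $\beta \in G$ with $a \in \beta \subseteq \alpha$. The decisive observation is that $G$ is a \emph{subset of $M$}, so $\beta$ is itself a point lying below $\alpha$. Flatness says precisely that $\alpha$ is minimal among points with respect to inclusion, so $\beta \subseteq \alpha$ forces $\beta = \alpha$, whence $\alpha = \beta \in G$. As $\alpha$ was arbitrary this gives $M \subseteq G$, and since conversely $G \subseteq M$ by the definition of set-generation, we conclude $M = G$, which is a set.

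The argument is short and there is no serious obstacle once \reftheo{pointsandfinitaryNID} is in hand. The one point that genuinely does the work — and which I would be careful to spell out — is that the members of the generating family $G$ are themselves points, so that the minimality guaranteed by flatness actually applies to them. The inhabitedness of points is also essential, as it is what guarantees that at least one generator $\beta$ sits below a given $\alpha$; without it the generation clause would yield no generator to which minimality could be applied.
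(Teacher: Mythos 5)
Your argument is correct and is exactly the intended one (the paper states this corollary without proof, treating it as immediate from \reftheo{pointsandfinitaryNID} and the definition of flatness). You rightly identify the two points that make it work: the generators are themselves points, and inhabitedness supplies a generator below any given point to which minimality can be applied.
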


\begin{defi}{morphismofformalspaces}
A \emph{continuous map} or a \emph{morphism of formal spaces} $F: (\mathbb{P}, {\rm Cov}) \to (\mathbb{Q}, {\rm Cov'})$ is a relation $F \subseteq \mathbb{P} \times \mathbb{Q}$ such that:
\begin{enumerate}
\item[(1)] If $F(p, q)$, $p' \leq p$ and $q \leq q'$, then $F(p', q')$.
\item[(2)] For every $q \in \mathbb{Q}$, the set $\{ p \, : \, F(p, q) \}$ is  closed under the covering relation.
\item[(3)] For every $p \in \mathbb{P}$ there is a a cover $S \in {\rm Cov}(p)$ such that each $p' \in S$ is related via $F$ to some element $q' \in \mathbb{Q}$.
\item[(4)] For every $q_0, q_1 \in \mathbb{Q}$ and element $p \in \mathbb{P}$ such that $F(p, q_0)$ and $F(p, q_1)$, there is a cover $S \in {\rm Cov}(p)$ such that every $p' \in S$ is related via $F$ to an element which is smaller than or equal to both $q_0$ and $q_1$.
\item[(5)] Whenever $F(p, q)$ and $T$ covers $q$, there is a sieve $S$ covering $p$, such that every $p' \in S$ is related via $F$ to some $q' \in T$.
\end{enumerate}
\end{defi}

\begin{theo}{elemNIDandcoeqalizers}
Elementary NID implies that the category of set-presented formal spaces has all coequalizers.
\end{theo}
\begin{proof}
See Proposition 7.9 in \cite{aczeletal12}: the key step amounts to showing that the class of models of a certain elementary game theory is set-generated.
\end{proof}

\begin{rema}{workofishiharaandkawai}
In \cite{ishiharakawai12}, Ishihara and Kawai use non-deterministic inductive definitions to show that coequalizers exist in the categories of basic pairs and concrete spaces as introduced by Sambin \cite{sambin03, sambin12}.
\end{rema}

The following result is new.

\begin{theo}{setgenerationformorphisms}
NID implies that the class of morphisms between two set-presented formal spaces is set-generated.
\end{theo}
\begin{proof}
Suppose ${\rm BCov}$ is a presentation for $(\mathbb{P}, {\rm Cov})$ and ${\rm BCov}'$ is a presentation for $(\mathbb{Q}, {\rm Cov'})$. Then a continuous morphism $F: (\mathbb{P}, {\rm Cov}) \to (\mathbb{Q}, {\rm Cov'})$ is nothing but a collection of propositional letters $\{ F(p, q) \, : \,  p \in \mathbb{P}, q \in \mathbb{Q} \}$ satisfying the following game sequents:
\begin{eqnarray*}
F(p, q) & \to & F(p', q') \quad \mbox{for all }  p' \leq p \mbox{ and } q \leq q'  \\
\bigwedge_{q' \in T} F(p, q') & \to & F(p, q) \quad \mbox{for all } T \in {\rm BCov}'(q) \\
& & \bigvee_{S \in {\rm BCov}(p)} \, \bigwedge_{p' \in S} \,  \bigvee_{q' \in \mathbb{Q}} F(p', q') \quad \mbox{for all } p \in \mathbb{P}  \\
F(p, q_0) \land F(p, q_1) & \to & \bigvee_{S \in {\rm BCov}(p)} \, \bigwedge_{p' \in S} \,  \bigvee_{q' \leq q_0, \, q_1} F(p', q') \\
F(p, q) & \to & \bigvee_{S \in {\rm BCov}(p)} \, \bigwedge_{p' \in S} \, \bigvee_{q' \in T} F(p', q') \quad \mbox{for all } T \in {\rm BCov}'(q)
\end{eqnarray*}
This shows the desired result.
\end{proof}

\section{Justification of the NID principle}

In order to show that the NID principle is acceptable from a  generalised-predicative perspective, we will show that is validated by Aczel's interpretation of {\bf CZF} in Martin-L\"of's type theory as in \cite{aczel78,aczel82,aczel86} (provided type theory comes equipped with an iterative universe closed under W-types). As the argument is rather complex, we will proceed in several steps. In the Section 6.2 we will give a first argument, inspired by Appendix A of \cite{aczel06}; it establishes slightly less than what we just claimed, because it requires a strong form of the regular extension axiom. In the Section 6.3 we will sharpen this argument to obtain the desired result, exploiting ideas that Ishihara used to derive finitary NID in {\bf CZF} + {\bf RDC} (see \refcoro{RDCimpliesSGA} below). But first we collect those properties of Aczel's interpretation that we will need for our proofs.

\subsection{Properties of Aczel's interpretation of CZF}

The crucial property of Aczel's interpretation that we will need is that it validates the Presentation Axiom {\bf PA} (see \cite{aczel82}). Recall that a \emph{base} (or a \emph{projective}) is a set $A$ such that every surjection $f: X \to A$ has a section. (It follows from the collection axiom that also every surjective map $f: X \to A$ from a class $X$ to a base $A$ is split.) The presentation axiom says that every set is the surjective image of a base.

The presentation axiom immediately has some interesting consequences. Call a map $g: D \to C$ a \emph{base map}, if both its codomain $C$ and all its fibres $D_c = f^{-1}(c)$ are bases.

\begin{lemm}{paandcollsquares}
The presentation axiom {\bf PA} implies that every map $g: B \to A$ fits into a commuting square
\diaglab{colsquare}{ D \ar[d]_h \ar@{->>}[r]^q & B \ar[d]^g \\
C \ar@{->>}[r]_p & A }
such that
\begin{enumerate}
\item $p$ is surjective,
\item the induced map $D \to C \times_A B$ is surjective and
\item $h$ is a base map.
\end{enumerate}
Moreover, every such square is a \emph{collection square}, in that for any $a \in A$ and any surjection $l: E \to B_a$ there is a $c \in C$ with $p(c) = a$ and a map $k: D_c \to E$ such that $l \circ k = q_c$.
\end{lemm}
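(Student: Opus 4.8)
The statement has two halves: constructing a square satisfying properties (1)--(3), and verifying that any square with these properties is automatically a collection square. I would treat them separately, and in fact I expect the second half to need only properties (1) and (3).

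For the construction, the plan is to build $h\colon D \to C$ fibrewise over a base cover of $A$. First, using {\bf PA}, pick a base $C$ together with a surjection $p\colon C \twoheadrightarrow A$; this is property (1). The fibre of the pullback projection $C \times_A B \to C$ over a point $c$ is $B_{p(c)} = g^{-1}(p(c))$, and by {\bf PA} each such fibre admits a surjection from a base. The key move is to make this choice uniform in $c$: by strong collection there is a set $R$ of triples $(c, G, s)$, with $c \in C$, $G$ a base, and $s\colon G \twoheadrightarrow B_{p(c)}$, whose first projection $R \to C$ is onto; since $C$ is a base, this projection splits, yielding genuine functions $c \mapsto G_c$ and $c \mapsto s_c$. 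I would then set $D = \{(c,x) : c \in C,\ x \in G_c\}$, with $h(c,x) = c$ and $q(c,x) = s_c(x)$. Because $s_c(x) \in g^{-1}(p(c))$ the square commutes; $h$ is a base map, since its codomain $C$ and all its fibres $G_c$ are bases (note we never need $D$ itself to be a base), giving (3); and the induced map $(c,x) \mapsto (c, s_c(x))$ is onto $C \times_A B$ because each $s_c$ is onto, giving (2).

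For the collection-square property, I would fix $a \in A$ and a surjection $l\colon E \twoheadrightarrow B_a$. Surjectivity of $p$ supplies some $c$ with $p(c) = a$, and commutativity of the square forces $q_c$ to land in $B_a$, so $q_c\colon D_c \to B_a$. Since $h$ is a base map, $D_c$ is a base. I would then pull $l$ back along $q_c$: the projection $D_c \times_{B_a} E \to D_c$ is a surjection onto a base, hence splits, and composing the splitting with the projection to $E$ produces the required $k\colon D_c \to E$ with $l \circ k = q_c$. This step uses only (1) and (3), which is why the ``moreover'' holds for every square of the stated form.

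The main obstacle is the uniformity step in the construction: converting the merely existential base covers furnished by {\bf PA} on each individual fibre into a single set-indexed family $(G_c, s_c)_{c \in C}$. This is exactly where taking $C$ to be a base pays off, since it lets us split the surjection $R \to C$ delivered by strong collection. Everything else is routine diagram-chasing with pullbacks together with the defining splitting property of bases.
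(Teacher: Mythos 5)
Your construction is exactly the one the paper sketches: cover $A$ by a base $C$ via \textbf{PA}, cover each fibre $B_{p(c)}$ by a base via \textbf{PA} again, and use the fact that $C$ is a base (via strong collection plus splitting) to make these fibrewise covers into a function of $c$. Your additional verification of the ``moreover'' clause --- splitting the pullback of $l$ along $q_c$ over the base $D_c$ --- is correct and fills in a step the paper's outline leaves implicit.
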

\begin{proof}
We just outline the construction. First one applies {\bf PA} to cover $A$ with a base via a map $p: C \to A$. Applying {\bf PA} again we see that
\begin{quote}
for each $c \in C$ there exists a base $D$ and a cover $q: D \to B_{p(c)}$.
\end{quote}
Using the fact that $C$ is a base we find bases $D_c$ and covers $q_c: D_c \to B_{p(c)}$ as a function of $c \in C$. This completes the construction.
\end{proof}

A more refined analysis shows that the interpretation validates the principle that the class of bases is closed under exponentials (see \cite[Theorem 3.6]{aczel86}). This can be used to show that the following dependent choice principle for W-types (see \cite{palmgren05a,palmgren06}) is valid as well.

\begin{theo}{dependentchoiceforWtypes}
Let $f: B \to A$ be a base map. Then the interpretation of {\bf CZF} in the type theory {\bf ML$_{1W}$V} validates the following dependent choice principle for W-types:
\begin{quote}
If $X$ is a set and for every $a \in A$ there is a total relation
\[ R_a \subseteq X^{B_a} \times X, \]
then there is a function $h: W_f \to X$ such that for every ${\rm sup}_a t \in W_f$ one has $(h \circ t, h({\rm sup}_a t)) \in R_a$.
\end{quote}
It also validates the ``relativised'' version of this principle where $X$ can be a class.
\end{theo}
\begin{proof}
Cover $X$ with a base via a map $p: Y \to X$. Then we obtain for every $a \in A$ a total relation
\[ S_a \subseteq Y^{B_a} \times Y \]
defined by
\[ (s, y) \in S_a \Longleftrightarrow (p \circ s, p(y)) \in R_a. \]
Since $A$, $Y$ and the $B_a$ are bases and, under the interpretation, bases are closed under exponentials, we get for every $a \in A$ a function $\sigma_a: Y^{B_a} \to Y$ such that $S_a(s, \sigma_a(s))$ for all $s \in Y^{B_a}$. This gives $Y$ the structure of a $P_f$-algebra and hence we get a function $g: W_f \to Y$ such that $g({\rm sup}_a(t)) = \sigma_a(g \circ t)$.

Set $h = p \circ g$. For every ${\rm sup}_a(t) \in W_f$ we have that $S_a(g \circ t, \sigma_a(g \circ t))$, hence $R_a(p \circ g \circ t, p(\sigma_a(g \circ t)))$ and $R_a(h \circ t, h({\rm sup}_a(t)))$ as desired.

The principle is also validated if $X$ is a class, but here we only sketch the argument. First note that the relativised principle is valid in type theory (the proof of \cite[Theorem 7.2]{palmgren05a} never uses the fact that $X$ is a small type). Observe also that the interpretation of {\bf CZF} in {\bf ML$_{1W}$V} validates the statement that $W_f$ is injectively presented (see the Lemma on page 47 of \cite{aczel86}). Hence the statement follows as in \cite[Theorem 5.6]{aczel82}.
\end{proof}

\subsection{First proof}

The aim of this subsection is to prove:

\begin{theo}{1stproofoffullNID}
Full NID follows from the axiom of dependent choice for W-types, the presentation axiom and usREA.
\end{theo}

We need to define usREA.

\begin{defi}{usREA}
A set $U$ is \emph{regular} if it is \emph{transitive}, i.e., $a \in b \in U$ implies $a \in U$, and for each $a \in U$ and total relation $R$ from $a$ to $U$ there exists $b \in U$ such that
\[ (\forall x \in a) \, (\exists y \in b) \, R(x, y) \land (\forall y \in b) \, (\exists x \in a) \, R(x, y). \]
The set $U$ is \emph{union-closed}, if for every $x \in U$ also $\bigcup x \in U$. And $U$ is called \emph{separative}, if for any $a, b \in U$ also $\{ \emptyset \, : \, a \subseteq b \} \in U$.

The axiom usREA states: every set is a subset of a union-closed regular separative set.
\end{defi}

\begin{proof} (Of \reftheo{1stproofoffullNID}.)
Suppose ${\cal R}$ is a non-deterministic inductive definition on a set $X$.

First, let
\[ g: \sum_{(a, b) \in {\cal R}} a \to {\cal R} \]
be the first projection. Using \reflemm{paandcollsquares}, we find a base map $h$ and a collection square of the form:
\diaglab{2ndcolsquare}{ D \ar[d]_h \ar[r]^(.35)q & \sum_{(a, b) \in {\cal R}} a \ar[d]^g \\
C \ar[r]_p & {\cal R}. }
We will write ${\rm incl}_1: C \to C + 1$ for the inclusion into the first component, $f = {\rm incl}_1 \circ h: D \to C + 1$ and $W = W_f$.

Next, let $U$ be a set which is regular, union-closed and separative, and contains ${\cal R}$, $\{ x \}$ for all $x \in X$, $D_c$ for all $c \in C$ and $W_f$. 

\begin{lemm}{propofU}
Let $u \in U$.
\begin{enumerate}
\item If $t: u \to U$ is any map with $u \in U$, then ${\rm Im}(t) \in U$ and $\bigcup {\rm Im}(t) \in U$.
\item ${\cal R}_u = \{ (a, b) \in {\cal R} \, : \, a \subseteq u \} \in U$.
\end{enumerate}
\end{lemm}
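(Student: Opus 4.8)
The plan is to prove the two parts in order, using part (1) as an auxiliary tool for part (2).

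For part (1), I would turn the function $t$ into a total relation and invoke regularity of $U$. Given $u \in U$ and $t \colon u \to U$, define $R(x,y) \iff y = t(x)$; this is a total relation from $u$ to $U$, since $t$ is everywhere defined and takes values in $U$. Regularity then yields a $b \in U$ with $(\forall x \in u)(\exists y \in b)\, y = t(x)$ and $(\forall y \in b)(\exists x \in u)\, y = t(x)$. The first clause gives ${\rm Im}(t) \subseteq b$ and the second gives $b \subseteq {\rm Im}(t)$, so in fact $b = {\rm Im}(t)$ and hence ${\rm Im}(t) \in U$. Union-closure of $U$ then immediately gives $\bigcup {\rm Im}(t) = \bigcup b \in U$.

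For part (2), the idea is to realise the bounded separation defining ${\cal R}_u$ inside $U$ by combining separativity (to manufacture a truth value) with part (1) (to collect the pieces). First note that, since ${\cal R} \in U$ and $U$ is transitive, every $p = (a,b) \in {\cal R}$ lies in $U$, and unravelling the Kuratowski pair shows $a \in U$ as well. Separativity applied to $a, u \in U$ gives the subsingleton $v_p := \{\emptyset : a \subseteq u\} \in U$, which is $\{\emptyset\}$ when $a \subseteq u$ and $\emptyset$ otherwise. Applying part (1) to the constant map $v_p \to U$ with value $p$ produces its image $w_p := \{\, p : a \subseteq u\,\} \in U$, equal to $\{p\}$ or $\emptyset$ according as $a \subseteq u$ holds or not. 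Finally I would apply part (1) once more to the function ${\cal R} \to U$, $p \mapsto w_p$ (legitimate since ${\cal R} \in U$): its image lies in $U$, and so does the union of that image, which is exactly $\bigcup_{p \in {\cal R}} w_p = \{\, (a,b) \in {\cal R} : a \subseteq u \,\} = {\cal R}_u$.

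The only genuinely delicate point is part (2): unlike {\bf IZF}, we have no full separation available, so the subset ${\cal R}_u$ cannot simply be separated off, and the whole weight of the argument is that the closure properties of $U$ together suffice to simulate exactly the instance of bounded separation we need — transitivity to reach $a$, separativity to encode the possibly undecidable predicate $a \subseteq u$ as an element of $U$, and regularity-plus-union to reassemble the pieces. I would double-check the bookkeeping that $\bigcup_{p} w_p$ really equals ${\cal R}_u$, since that is where the choice of truth-value encoding pays off, but beyond that no new idea is required.
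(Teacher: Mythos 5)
Your proof is correct and takes essentially the same route as the paper's: part (1) is regularity plus union-closure, and part (2) uses separativity to encode $a \subseteq u$ as a subsingleton in $U$, regularity to convert it into $\{\, (a,b) : a \subseteq u \,\}$, and part (1) applied to the map ${\cal R} \to U$, $p \mapsto w_p$, to reassemble ${\cal R}_u$ as the union of its image. The paper merely compresses the bookkeeping (transitivity to get $a \in U$, the constant-map trick) that you spell out explicitly.
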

\begin{proof}
The first point follows immediately from the fact that $U$ is regular and union-closed.

To show the second point, observe that if $a \in U$, then $\{ (a, b) \, : a \subseteq u \} \in U$, because $U$ is separative and regular. Hence we have a function $t: {\cal R} \to U$ which sends $(a, b) \in {\cal R}$ to $\{ (a, b) \, : a \subseteq u \}$. But then it follows from the first point that ${\cal R}_u = \bigcup {\rm Im}(t) \in U$.
\end{proof}

We claim that
\[ \Sigma = \{ u \in U \, : \, u \mbox{ is closed under } \cal{R} \} \]
generates ${\rm Clos}_{\cal R}(X)$. To show this, let $Y$ be ${\cal R}$-closed and $y \in Y$. Our aim to is to construct a $\sigma \in \Sigma$ such that $y \in \sigma$ and $\sigma \subseteq Y$. We will construct this set $\sigma$ just after equation (\ref{justafter}).

Write
\[ P = {\rm Pow}_U(Y) = \{ u \in U \, : \, u \subseteq Y \} \]
and
\[ {\cal T} = \{ (u, v) \, : \, (\forall (a, b) \in {\cal R}_u) \, b \between v \}. \]

\begin{lemm}{weirdlemma}
$(\forall u \in P) \, (\exists v \in P) \, (u, v) \in {\cal T}$.
\end{lemm}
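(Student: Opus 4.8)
The statement to prove is Lemma \ref{weirdlemma}: for every $u \in P$ there exists $v \in P$ with $(u,v) \in {\cal T}$. Unpacking the definitions, $u \in P$ means $u \in U$ and $u \subseteq Y$; I want $v \in U$, $v \subseteq Y$, and for every rule $(a,b) \in {\cal R}_u$ (i.e. every rule whose premise $a$ is contained in $u$) the set $b$ meets $v$. Since $u \subseteq Y$ and $Y$ is ${\cal R}$-closed, for each such rule we have $a \subseteq u \subseteq Y$, hence $b \between Y$: there is a witness $w_{(a,b)} \in b \cap Y$. So the elements I want to put into $v$ all exist in $Y$; the entire difficulty is collecting them into a set $v$ that lies in the regular set $U$.

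**Plan.** The plan is to build $v$ as the image (or union of images) of a suitable function defined on a set already known to be in $U$, and then use the regularity/union-closure of $U$ via Lemma \ref{propofU} to conclude $v \in U$. First I would invoke Lemma \ref{propofU}(2) to get ${\cal R}_u \in U$. Next, I set up a total relation from ${\cal R}_u$ to $Y$: namely, relate each rule $(a,b) \in {\cal R}_u$ to any element $w \in b \cap Y$; this relation is total precisely because $Y$ is ${\cal R}$-closed and $a \subseteq u \subseteq Y$ forces $b \between Y$. Then I apply regularity of $U$ to the set ${\cal R}_u \in U$ and this total relation into $U$ (note $Y \subseteq U$ need not hold directly, but each chosen witness $w$ lies in $X$, and $\{w\} \in U$ for all $x \in X$ by construction of $U$; I will relate $(a,b)$ to the singleton $\{w\}$ or to $w$ viewed inside $U$ so that the relation genuinely lands in $U$). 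Regularity then yields a set $v \in U$ that is ``sandwiched'': every rule in ${\cal R}_u$ has a witness in $v$, and every element of $v$ is a witness for some rule. The first clause gives $b \between v$ for all $(a,b) \in {\cal R}_u$, i.e. $(u,v) \in {\cal T}$; the second clause guarantees $v \subseteq Y$, since every element of $v$ was chosen from some $b \cap Y \subseteq Y$, hence $v \in P$.

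**The main obstacle.** The delicate point is making the total relation land inside $U$ so that regularity of $U$ applies cleanly. The witnesses $w$ live in $Y$, and I only know $\{x\} \in U$ for $x \in X$ (and $Y \subseteq X$), not that $Y \subseteq U$ as a subset-membership. So I must phrase the relation $R \subseteq {\cal R}_u \times U$ by relating each rule $(a,b)$ to singletons $\{w\}$ with $w \in b \cap Y$; these singletons are in $U$. Applying regularity to $R$ gives a set $b^* \in U$ of such singletons with the two sandwiching properties, and then I take $v = \bigcup b^*$, which lies in $U$ by union-closure (Lemma \ref{propofU}(1), since $b^* \in U$ and $b^* \subseteq U$ via a function). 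Unwinding: $v = \bigcup b^*$ consists exactly of the chosen witnesses, so $v \subseteq Y$ and $b \between v$ for every $(a,b) \in {\cal R}_u$, giving $v \in P$ and $(u,v) \in {\cal T}$. I expect the only real care needed is tracking this singleton-wrapping so that both the ``witness exists in $v$'' direction and the ``$v \subseteq Y$'' direction survive the passage through regularity and union-closure; the use of separativity has already been absorbed into Lemma \ref{propofU}(2), so it does not reappear here.
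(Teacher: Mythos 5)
Your proposal is correct and follows essentially the same route as the paper: apply regularity of $U$ to ${\cal R}_u \in U$ and the total relation relating each $(a,b) \in {\cal R}_u$ to a witness $\beta \in b \cap Y$ (total because $Y$ is ${\cal R}$-closed and $a \subseteq u \subseteq Y$), obtaining a set $v \in U$ that is sandwiched so that $v \subseteq Y$ and $(u,v) \in {\cal T}$. The singleton-wrapping detour you introduce to make the relation land in $U$ is unnecessary: since $U$ is transitive and $\{x\} \in U$ for every $x \in X$, every element of $X$ is already an element of $U$, so the witnesses themselves lie in $U$ and regularity applies directly.
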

\begin{proof}
Suppose $u \in P$, so $u \in U$ and $u \subseteq Y$. Since $Y$ is ${\cal R}$-closed, we have:
\[ (\forall (a, b) \in {\cal R}_u) \, (\exists \beta \in U) \, \beta \in b \cap Y. \]
Since ${\cal R}_u \in U$ and $U$ is regular, there is a $v \in U$ with $v \subseteq Y$ such that
\[ (\forall (a, b) \in {\cal R}_u) \, (\exists \beta \in v) \, \beta \in b \cap Y. \]
This proves the lemma.
\end{proof}

We are now ready to apply the axiom for dependent choice for W-types to $W_f$.
\begin{quote}
The set is $P$. $R_* \subseteq 1 \times P$ consists only of the pair $(*, \{ y \})$. For every $c \in C$, the relation $R_{c} \subseteq P^{D_c} \times P$ consists of those pairs $(\phi, u)$ such that $(\bigcup {\rm Im}(\phi), u) \in {\cal T}$.
\end{quote}
By the lemma we just proved all the relations here are total, so the axiom of dependent choice for W-types gives us a map $\phi: W_f \to P$ such that for every ${\rm sup}_c (t) \in W_f$ we have
\begin{equation} \label{justafter}
 (\phi \circ t, \phi({\rm sup}_ c (t))) \in R_c.
 \end{equation}
Let $\sigma := \bigcup {\rm Im}(\phi) \in U$. By construction, $\sigma \in U$, $y \in \sigma$ and $\sigma \subseteq Y$. So it remains to show that $\sigma \in \Sigma$, that is, that $\sigma$ is ${\cal R}$-closed.

Suppose $a \subseteq \sigma$ and $(a, b) \in {\cal R}$. We need to show that $\sigma \between b$. Our assumption $a \subseteq \sigma = \bigcup {\rm Im}(\phi)$ means that
\[ (\forall x \in a) \, (\exists w \in W_f) \, x \in \phi(w). \]
Then, because \refdiag{2ndcolsquare} is a collection square, we obtain a $c \in C$ and a map $t: D_c \to W_f$ such that
\[ (\forall d \in D_c) \, q(d) \in (\phi \circ t)(d), \]
and therefore $a \subseteq \bigcup {\rm Im}(\phi \circ t)$.

Also, for ${\rm sup}_c(t) \in W_f$  we have
\[ (\phi \circ t, \phi({\rm sup}_c(t)) \in R_{c}, \]
hence $(\bigcup {\rm Im}(\phi \circ t), \phi({\rm sup}_c(t)) \in {\cal T}$. But then, by definition of ${\cal T}$, we get $\phi({\rm sup}_c(t)) \between b$. Because $\phi({\rm sup}_c(t)) \subseteq \bigcup {\rm Im}(\phi) = \sigma$, the proof is finished.
\end{proof}

\subsection{Second proof}

The proof in the previous subsection establishes a result which is weaker than desired, because it relies on the existence of universes. In the present section we eliminate these and replace them in favour of the relativised dependent choice axiom for W-types. A first step towards this goal is isolating all the uses of the regular universe in one proposition. We continue to use the same notation. So $X$ is a set and ${\cal R}$ is a non-deterministic inductive definition on $X$. Also the maps $f, g$ and $h$ are as before.

\begin{lemm}{propertiesofU}
There is a set ${\cal P} \subseteq {\rm Pow}(X)$ such that:
\begin{enumerate}
\item ${\cal P}$ contains all singletons.
\item If $t: D_c \to {\cal P}$ is any map, then $\bigcup {\rm Im}(t) \in {\cal P}$.
\item If for some $u \in {\cal P}$ and $A \subseteq X$, we have $(\forall (a, b) \in {\cal R}_u) \, (\exists x \in A) \, x \in b$, then there is a $v \in {\cal P}$ with $v \subseteq A$ such that $(\forall (a, b) \in {\cal R}_u) \, (\exists x \in v) \, x \in b$.
\end{enumerate}
\end{lemm}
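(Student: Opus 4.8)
The plan is to build $\mathcal P$ as a predicative surrogate for the set $P = {\rm Pow}_U(Y)$ of the first proof, replacing the condition ``$u \in U$'' by ``$u$ lies in the image of a map out of a W-type''. Concretely, I would use the presentation axiom to fix a base $X_0$ together with a surjection $r \colon X_0 \to X$, and then modify the base map $f = {\rm incl}_1 \circ h$ to the base map $\hat f \colon \hat D \to C + X_0$ having the same fibres $D_c$ over $c \in C$ but with the single terminal point replaced by the base $X_0$ (all of whose fibres are empty). Since $X_0$, $C$ and every $D_c$ is a base, this is again a base map, so the W-type $W_{\hat f}$ exists. Its well-founded trees have ``union'' nodes labelled by $c \in C$ with branching $D_c$, and ``singleton'' leaves labelled by $\xi \in X_0$. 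Define $\theta \colon W_{\hat f} \to {\rm Pow}(X)$ by recursion, sending a leaf $\xi$ to $\{ r(\xi) \}$ and a union node to the union $\bigcup$ of the $\theta$-values of its children, and put $\mathcal P = {\rm Im}(\theta)$, which is a set by Replacement.

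Properties 1 and 2 are then immediate. Every singleton $\{x\}$ is $\theta$ of a leaf $\xi \in r^{-1}(x)$, giving 1. For 2, note that $\theta \colon W_{\hat f} \to \mathcal P$ is a surjection, so since $D_c$ is projective we may lift any $t \colon D_c \to \mathcal P$ to $\tilde t \colon D_c \to W_{\hat f}$ with $\theta \circ \tilde t = t$; the union node over $c$ with children $\tilde t$ then has $\theta$-value $\bigcup {\rm Im}(t)$, so $\bigcup {\rm Im}(t) \in \mathcal P$.

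The real content is property 3, which is the predicative replacement for the use of the regular universe in \reflemm{weirdlemma}. First observe that $\mathcal R_u = \{ (a,b) \in \mathcal R \,:\, a \subseteq u \}$ is a set by bounded separation, and that \emph{over} $\mathcal R_u$ the hypothesis makes the relation ``$(a,b) \mapsto x$ with $x \in A \cap b$'' total; so Strong Collection already yields a set $v_0 \subseteq A$ with $v_0 \between b$ for every $(a,b) \in \mathcal R_u$. What regularity did for free, namely placing such a hitting set back inside the ambient ``small'' collection, must now be simulated by producing an element of $\mathcal P$ that lies between a hitting set and $A$. I would do this by covering $\mathcal R_u$ through the collection square $p \colon C \to \mathcal R$ of \refdiag{2ndcolsquare}, choosing $b$-witnesses along the resulting base by projectivity, and then assembling them into a single $\theta$-value by the relativised dependent choice axiom for W-types (\reftheo{dependentchoiceforWtypes}): the point of RDC is to convert the node-by-node choice of witnesses into one function into $\mathcal P$, which is exactly the device that the main argument will later iterate to build the generating set.

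The main obstacle is precisely this last realization step. One cannot simply take $v = {\rm Im}(s)$ for an arbitrary section $s$ of a base-cover of $\mathcal R_u$ and declare it a member of $\mathcal P$: by the presentation axiom every subset of $X$ is a base-indexed union of singletons, so a version of $\mathcal P$ closed under \emph{all} such unions would collapse to ${\rm Pow}(X)$ and fail to be a set. Hence $\mathcal P$ can be closed only under the set-indexed family of unions coming from the fibres $D_c$, and the hitting set must be forced through exactly those fibres. Compounding this, the predicate ``$a \subseteq u$'' is not decidable, so $v$ cannot be built by a naive case split over $c \in C$ (inserting a witness when $a \subseteq u$ and $\emptyset$ otherwise), since the resulting relation on $C$ is not total. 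The regularity axiom concealed this difficulty inside its internal Strong Collection; replacing it faithfully is what forces the combined use of the collection square and the dependent choice axiom for W-types.
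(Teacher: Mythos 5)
There is a genuine gap, and it sits exactly where you locate ``the real content''. Your set ${\cal P}={\rm Im}(\theta)$ is fixed once and for all as the collection of single subsets of $X$ obtainable from singletons by iterated $D_c$-indexed unions, and property 3 is a property of that fixed set: for \emph{every} $u\in{\cal P}$ and \emph{every} $A\subseteq X$ satisfying the hypothesis, a suitable $v\subseteq A$ must already lie in ${\cal P}$. Since $A$ ranges over all of ${\rm Pow}(X)$, this is a Fullness-type requirement, and no amount of clever argument afterwards (collection squares, RDC) can make it true of a set that was not built to satisfy it --- RDC can only help you \emph{construct} ${\cal P}$, not retrofit witnesses into it. Concretely: if ${\cal R}=\emptyset$ the hypothesis of 3 is vacuous, so 3 demands $(\exists v\in{\cal P})\,v\subseteq A$ for every $A\subseteq X$; taking $A=\emptyset$ forces $\emptyset\in{\cal P}$, but your ${\cal P}$ then consists only of singletons (there are no union nodes when $C=\emptyset$). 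More generally, for an arbitrary subclass-like $A$ there is no reason any tree built solely from singleton leaves and $D_c$-unions evaluates to a hitting set contained in $A$. Your final paragraph correctly diagnoses why the naive repairs fail, but it does not supply one that works, so the proof is incomplete precisely at its essential step.

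The missing idea (this is how the paper's universe-free proof, \reflemm{propertiesofU2ndtime}, proceeds) is to make each tree node produce a whole \emph{family} of subsets, i.e.\ to recurse into ${\rm Pow}({\rm Pow}(X))$ rather than ${\rm Pow}(X)$, and to add a dedicated node type whose evaluation performs the Fullness step. The paper takes $\phi: F+\{0\}\to E+\{0,1\}$, where the fibre over $1$ is empty (a leaf contributing the family of all singletons), the fibre over $0$ is a single point (a unary node sending a family $U$ to a family $V$ with $(U,V)\in{\cal D}$, where ${\cal D}$ encodes exactly clause 3 and is total by Fullness plus Collection, \reflemm{Dtotal}), and the fibres $F_e$ handle the unions. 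Then ${\cal P}=\bigcup{\rm Im}(\rho)$ for the map $\rho:W_\phi\to{\rm Pow}({\rm Pow}(X))$ given by relativised dependent choice, and clause 3 holds \emph{by construction} because every $u\in{\cal P}$ lies in some $\rho(w)$ and the tree ${\rm sup}_0(w)$ contributes a full family of hitting sets for it. Your treatment of properties 1 and 2 is fine (and your lift of $t:D_c\to{\cal P}$ through $\theta$ by projectivity of $D_c$ is a legitimate simplification of the paper's use of the collection square there), but the construction must be redone along these lines before property 3 can be established.
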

\begin{proof}
Let $U$ be a big set which is regular, union-closed and separative and which contains ${\cal R}$, $\{ x \}$ for all $x \in X$ and $D_c$ for all $c \in C$. Then it follows from \reflemm{propofU} that ${\cal P} = \{ u \in U \, : \, u \subseteq X \}$ has all the desired properties.
\end{proof}

To show that this collects all the uses of the universe that we need, we are now going to prove that \reflemm{propertiesofU} together with the axiom of dependent choice from W-types to sets is sufficient for establishing full NID.

So let
\begin{eqnarray*}
\Delta & = & \{ k: W_f \to {\cal P} \, : \, (\forall \, {\rm sup}_{c}(t) \in W_f) \, (k \circ t, k({\rm sup}_{c}(t))) \in R_c \} \\
\Sigma & = & \{ \bigcup {\rm Im}(k) \, : \, k \in \Delta \}
\end{eqnarray*}
where $(\phi, u) \in R_c$ iff $(\bigcup {\rm Im}(\phi), u) \in {\cal T}$ and ${\cal T} \subseteq {\cal P} \times {\cal P}$ consists of those pairs $(u, v)$ such that
\[ (\forall (a,b) \in {\cal R}_u) \, v \between b. \]
\begin{lemm}{sigmaclosed}
$\Sigma \subseteq {\rm Clos}_{\cal R}(X)$.
\end{lemm}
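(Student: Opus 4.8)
The plan is to fix an arbitrary $k \in \Delta$, put $\sigma := \bigcup {\rm Im}(k)$, and prove that $\sigma$ is ${\cal R}$-closed; as $k$ ranges over $\Delta$ this yields $\Sigma \subseteq {\rm Clos}_{\cal R}(X)$. This is precisely the closure computation performed at the very end of the proof of \reftheo{1stproofoffullNID}, now read with $k$ in the role of $\phi$; the only ingredients we shall invoke are the membership clause defining $\Delta$, the fact that \refdiag{2ndcolsquare} is a collection square, and the inclusion ${\cal P} \subseteq {\rm Pow}(X)$. So let $(a, b) \in {\cal R}$ with $a \subseteq \sigma$; the goal is to exhibit an element of $b \cap \sigma$.

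The first, and essentially only substantial, step is to route the assumption through the collection square. Unfolding $a \subseteq \sigma = \bigcup {\rm Im}(k)$ gives $(\forall x \in a)\,(\exists w \in W_f)\, x \in k(w)$, a total relation from the fibre $a = g^{-1}(a,b)$ to $W_f$. I would turn it into a genuine function by setting $E = \{\,(x,w) : x \in a,\ w \in W_f,\ x \in k(w)\,\}$ and taking the first projection $l : E \to a$, which is surjective by the displayed totality. The collection square property applied to $(a,b) \in {\cal R}$ and $l$ then produces $c \in C$ with $p(c) = (a,b)$ and a map $\kappa : D_c \to E$ with $l \circ \kappa = q_c$. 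Writing $t : D_c \to W_f$ for the second component of $\kappa$, the identity $l \circ \kappa = q_c$ gives $q_c(d) \in k(t(d))$ for every $d \in D_c$; since $q_c : D_c \to a$ is surjective, this forces $a \subseteq \bigcup {\rm Im}(k \circ t)$.

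The remaining steps are pure bookkeeping with the definitions. Feeding the node ${\rm sup}_c(t) \in W_f$ into the clause $k \in \Delta$ gives $(k \circ t, k({\rm sup}_c(t))) \in R_c$, that is, $(\bigcup {\rm Im}(k \circ t),\, k({\rm sup}_c(t))) \in {\cal T}$. Since $a \subseteq \bigcup {\rm Im}(k \circ t)$, the rule $(a,b)$ belongs to ${\cal R}_{\bigcup {\rm Im}(k \circ t)}$, so the defining condition of ${\cal T}$ yields $k({\rm sup}_c(t)) \between b$. Finally $k({\rm sup}_c(t)) \subseteq \bigcup {\rm Im}(k) = \sigma$, whence the inhabited set $b \cap k({\rm sup}_c(t))$ sits inside $b \cap \sigma$, giving $b \between \sigma$ and completing the closure proof.

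The one delicate point is the collection-square step: we must convert the merely existential datum ``each $x \in a$ lies in some $k(w)$'' into a single $D_c$-indexed family $t$, precisely so that the node ${\rm sup}_c(t)$ becomes available to be substituted into the defining property of $\Delta$. In contrast to the first proof, regularity and the universe $U$ play no role at this stage; they have been confined entirely to \reflemm{propertiesofU}, and the ${\cal R}$-closure of $\sigma$ follows from the stated data alone.
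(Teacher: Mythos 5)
Your proof is correct and follows essentially the same route as the paper's: unfold $a \subseteq \bigcup{\rm Im}(k)$, use the collection square to produce $c \in C$ and $t: D_c \to W_f$ with $a \subseteq \bigcup{\rm Im}(k\circ t)$, then feed ${\rm sup}_c(t)$ into the defining clause of $\Delta$ to extract $k({\rm sup}_c(t)) \between b$. The only difference is that you spell out the application of the collection square (building the surjection $l: E \to a$ explicitly) where the paper simply invokes the property, which is a harmless elaboration.
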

\begin{proof}
Suppose $\sigma \in \Sigma$, so $\sigma =\bigcup {\rm Im}(k)$ for some $k \in \Delta$, and $a \subseteq \sigma$ for some $(a, b) \in {\cal R}$. Hence
\[ (\forall x \in a) \, (\exists w \in W_f) \, x \in k(w). \]
By the collection square property, we obtain a $c \in C$ and a map $t: D_c \to W_f$ such that
\[ (\forall d \in D_c) \, q(d) \in (k \circ t)(d). \]
It follows that $a \subseteq \bigcup {\rm Im}(k \circ t)$. For $w = {\rm sup}_{c} t$ we have that $(k \circ t, k(w)) \in R_{c}$ and therefore $(\bigcup {\rm Im}(k \circ t), k(w)) \in {\cal T}$ and $k(w) \between b$. Since $k(w) \subseteq \sigma$, this finishes the proof.
\end{proof}

\begin{lemm}{sigmagenerating}
$\Sigma$ generates ${\rm Clos}_{\cal R}(X)$.
\end{lemm}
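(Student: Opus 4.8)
The plan is to mimic the generating half of the proof of \reftheo{1stproofoffullNID}, using the set ${\cal P}$ of \reflemm{propertiesofU} wherever that proof appealed to the regular universe $U$. Concretely, I would fix an ${\cal R}$-closed subset $Y \subseteq X$ and an element $y \in Y$, and aim to produce a $\sigma \in \Sigma$ with $y \in \sigma \subseteq Y$; together with \reflemm{sigmaclosed} this is exactly what it means for $\Sigma$ to generate ${\rm Clos}_{\cal R}(X)$. The key move is to apply the axiom of dependent choice for W-types (\reftheo{dependentchoiceforWtypes}) to the base map $f: D \to C + 1$, taking as target the set ${\cal P}_Y = \{ u \in {\cal P} \, : \, u \subseteq Y \}$, which is a set by bounded separation.

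First I would set up the two families of relations, exactly as in the first proof but now valued in ${\cal P}_Y$: over the point $* \in 1$, let $R_*$ relate the empty map to the singleton $\{ y \}$, and over each $c \in C$ let $R_c$ consist of those pairs $(\phi, u) \in {\cal P}_Y^{D_c} \times {\cal P}_Y$ with $(\bigcup {\rm Im}(\phi), u) \in {\cal T}$. Totality of $R_*$ is immediate, since $\{ y \} \in {\cal P}$ by property~1 of \reflemm{propertiesofU} and $\{ y \} \subseteq Y$, so $\{ y \} \in {\cal P}_Y$. The main obstacle, and the real content of the argument, is totality of $R_c$, which is the analogue of \reflemm{weirdlemma}. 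Given $\phi: D_c \to {\cal P}_Y$, I would put $w = \bigcup {\rm Im}(\phi)$ and note that $w \in {\cal P}$ by property~2 and $w \subseteq Y$, so $w \in {\cal P}_Y$; then, since $Y$ is ${\cal R}$-closed, every $(a, b) \in {\cal R}_w$ has $a \subseteq w \subseteq Y$ and hence $b \between Y$. Feeding this into property~3 (with $u = w$ and $A = Y$) produces a $v \in {\cal P}$ with $v \subseteq Y$ and $v \between b$ for all $(a, b) \in {\cal R}_w$, i.e.\ a $v \in {\cal P}_Y$ with $(w, v) \in {\cal T}$, which is the required witness. Thus properties~2 and~3 are precisely what replace the regularity and union-closure of $U$ used in \reflemm{weirdlemma}.

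With both families total, dependent choice for W-types would hand me a $k: W_f \to {\cal P}_Y$ with $(k \circ t, k({\rm sup}_a t)) \in R_a$ for every ${\rm sup}_a t \in W_f$. For the nodes labelled by $c \in C$ this is exactly the defining clause of $\Delta$, so $k \in \Delta$ and hence $\sigma := \bigcup {\rm Im}(k) \in \Sigma$. To finish I would verify the two inclusions. Because $f = {\rm incl}_1 \circ h$ sends nothing into the summand $1$, the fibre of $f$ over $*$ is empty, so $W_f$ contains the leaf ${\rm sup}_*$ coming from the empty map, and the clause for $R_*$ forces $k({\rm sup}_*) = \{ y \}$; this is how $y$ enters, giving $y \in \{ y \} \subseteq \sigma$. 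Conversely each value $k(w) \in {\cal P}_Y$ is contained in $Y$, so $\sigma = \bigcup {\rm Im}(k) \subseteq Y$. This yields $\sigma \in \Sigma$ with $y \in \sigma \subseteq Y$, completing the proof. I expect essentially all the difficulty to be concentrated in the totality of $R_c$ above; the rest is bookkeeping transferred unchanged from \reftheo{1stproofoffullNID}.
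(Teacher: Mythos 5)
Your proposal is correct and follows essentially the same route as the paper: reduce to the totality statement $(\forall u \in P)(\exists v \in P)\,(u,v) \in {\cal T}$ for $P = \{u \in {\cal P} : u \subseteq Y\}$ via properties 2 and 3 of \reflemm{propertiesofU}, then apply dependent choice for W-types with $R_*$ pinned to $\{y\}$ to obtain $k \in \Delta$ and $\sigma = \bigcup {\rm Im}(k)$. You in fact spell out a couple of details (that $\bigcup {\rm Im}(\phi) \in {\cal P}_Y$, and how $y \in \sigma$ arises from the leaf over $*$) that the paper leaves implicit.
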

\begin{proof}
Suppose $Y \subseteq X$ is ${\cal R}$-closed and $y \in Y$. Let $P = \{ u \in {\cal P} \, : \, u \subseteq Y \}$. We first show that
\[ (\forall u \in P) \, (\exists v \in P) \, (u, v) \in {\cal T}. \]
So let $u \in P$, that is, $u \in {\cal P}$ and $u \subseteq Y$. Since $Y$ is ${\cal R}$-closed and $u \subseteq Y$, we have:
\[ (\forall (a, b) \in {\cal R}_u) \, (\exists x \in Y) \, x \in b. \]
Precisely for this quantifier combination, ${\cal P}$ satisfies a fullness property: so we obtain a $v \in {\cal P}$ with $v \subseteq Y$ such that:
\[ (\forall (a, b) \in {\cal R}_u) \, (\exists x \in v) \, x \in b. \]
This is precisely what we want.

We are now going to apply dependent choice for W-types.
\begin{quote}
The set is $P$. $R_* \subseteq 1 \times P$ consists just of the pair $(*, \{ y \})$. For every $c \in C$, we take the relation $R_c \subseteq P^{D_c} \times P$.
\end{quote}
What we proved above shows that all relations $R_c$ are total. The axiom of dependent choice for W-types now gives us a map $k: W_f \to P$ in $\Delta$. So $\sigma = \bigcup {\rm Im}(k) \in \Sigma$. Moreover, $y \in \sigma$ and $\sigma \subseteq Y$ by construction.
\end{proof}

We are now going to give a different proof of \reflemm{propertiesofU} using the axiom of relativised dependent choice for W-types as in \reftheo{dependentchoiceforWtypes}. Recall that \reflemm{propertiesofU} says:
\begin{lemm}{propertiesofU2ndtime}
There is a set ${\cal P} \subseteq {\rm Pow}(X)$ such that:
\begin{enumerate}
\item ${\cal P}$ contains all singletons.
\item If $t: D_c \to {\cal P}$ is any map, then $\bigcup {\rm Im}(t) \in {\cal P}$.
\item If for some $u \in {\cal P}$ and $A \subseteq X$, we have $(\forall (a, b) \in {\cal R}_u) \, (\exists x \in A) \, x \in b$, then there is a $v \in {\cal P}$ with $v \subseteq A$, such that $(\forall (a, b) \in {\cal R}_u) \, (\exists x \in v) \, x \in b$.
\end{enumerate}
\end{lemm}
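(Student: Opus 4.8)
The plan is to realise ${\cal P}$ as the image of a W-type of ``construction trees'' under an evaluation map into the class ${\rm Pow}(X)$, where the evaluation is produced by the \emph{relativised} dependent choice principle for W-types of \reftheo{dependentchoiceforWtypes}. This is exactly the point at which the class-valued version is indispensable, since ${\rm Pow}(X)$ is a proper class. First I would use {\bf PA} to fix a base $\tilde{X}$ together with a surjection $s \colon \tilde{X} \to X$, and then assemble a base map $g \colon B \to A$ whose associated polynomial functor has four kinds of constructor: a \emph{singleton} constructor for each $\tilde{x} \in \tilde{X}$ (arity $0$, intended value $\{ s(\tilde{x}) \}$), an \emph{empty} constructor (arity $0$, value $\emptyset$), a \emph{union} constructor for each $c \in C$ (arity $D_c$), and a single \emph{wide union} constructor (arity $C$). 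Concretely $A = \tilde{X} + 1 + C + 1$, with fibres $0, 0, D_c, C$ respectively. That $g$ really is a base map follows from {\bf PA} (so that $\tilde{X}$ and $C$ are bases, and each $D_c$ is a base because $h$ is a base map), together with the fact that under the interpretation the class of bases is closed under exponentials and finite coproducts.

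Next I would define the evaluation $e \colon W_g \to {\rm Pow}(X)$ by W-type recursion, which relativised dependent choice for W-types delivers by taking each relation $R_a$ to be the graph of the intended (total, indeed functional) operation: a singleton node $\tilde{x}$ evaluates to $\{ s(\tilde{x}) \}$, an empty node to $\emptyset$, and both a union node and the wide union node evaluate to the union $\bigcup {\rm Im}(e \circ t)$ of the values of their children. Since $W_g$ is a set, ${\cal P} := {\rm Im}(e) = \{ e(w) \, : \, w \in W_g \}$ is a set by Replacement, and ${\cal P} \subseteq {\rm Pow}(X)$ by construction.

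It then remains to check the three properties. Property 1 is immediate from the singleton constructors and the surjectivity of $s$. For property 2, given $t \colon D_c \to {\cal P}$, the set $\{ (d, w) \in D_c \times W_g \, : \, e(w) = t(d) \}$ surjects onto $D_c$; as $D_c$ is a base this surjection splits, giving a lift $\hat{t} \colon D_c \to W_g$ with $e \circ \hat{t} = t$, and then the union node over $c$ with children $\hat{t}$ evaluates to $\bigcup {\rm Im}(t)$, which is therefore in ${\cal P}$. Property 3 is the substantive step. Given $u \in {\cal P}$ and $A \subseteq X$ with $(\forall (a,b) \in {\cal R}_u)\,(\exists x \in A)\, x \in b$, I would pull this back along the surjection $p \colon C \to {\cal R}$ of the collection square \refdiag{2ndcolsquare}: writing $C_u = \{ c \in C \, : \, a_{p(c)} \subseteq u \}$, the relation on $C$ sending each $c \in C_u$ to some $\tilde{x} \in \tilde{X}$ with $s(\tilde{x}) \in A \cap b_{p(c)}$ and each $c \notin C_u$ to a dummy value is total, and splits because $C$ is a base. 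Feeding the resulting choice function into the wide union constructor (singleton children on $C_u$, empty children off $C_u$) produces an element $v \in {\cal P}$ with $v \subseteq A$; and $v \between b$ for every $(a,b) \in {\cal R}_u$, because $p$ restricts to a surjection $C_u \to {\cal R}_u$ and the chosen witness at the corresponding $c$ lies in $A \cap b \cap v$.

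The main obstacle I anticipate is the definition of the evaluation map $e$ into the proper class ${\rm Pow}(X)$: ordinary W-type recursion yields set-valued functions, and it is precisely to legitimise this class-valued recursion (equivalently, to choose set-sized representatives coherently along the whole tree) that the relativised form of \reftheo{dependentchoiceforWtypes} is invoked. A secondary point needing care is the verification that $g$ is a base map, which rests on the closure of the class of bases under the finite coproducts and fibrewise data used in building $A$ and $B$; once this is in place, the remaining verifications reduce to the splittings of surjections onto the bases $C$ and $D_c$ as indicated above.
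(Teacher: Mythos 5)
Your overall architecture (a W-type of construction trees, an evaluation map into a proper class obtained from the relativised dependent choice principle of \reftheo{dependentchoiceforWtypes}, and ${\cal P}$ as a set by Replacement) is in the right spirit, and your treatments of properties 1 and 2 are fine --- in particular, splitting the surjection onto the base $D_c$ is a legitimate substitute for the second collection square used in the text. The gap is in property 3. Your relation on $C$ is defined by cases: send $c \in C_u$ to a witness $\tilde{x}$ and $c \notin C_u$ to a dummy. Totality of this relation requires, for each $c \in C$, that $a_{p(c)} \subseteq u \lor \neg(a_{p(c)} \subseteq u)$, and this instance of excluded middle is not available in {\bf CZF}: $C_u$ is a perfectly good subset of $C$ by bounded separation, but it is not a decidable one, and it need not itself be a base, so you can neither split the surjection over $C_u$ alone nor extend a partial choice function to all of $C$ by the dummy value. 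Without the case split the relation you can actually write down is either not total (no witness exists off $C_u$) or admits the dummy on $C_u$ as well, in which case the resulting $v$ fails to meet the relevant $b$'s.

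This is not a cosmetic defect of your formulation: with nodes denoting single subsets of $X$, the constructor that is supposed to witness property 3 must select, for each pair $(u, A)$, which children to ``switch on'', and that is exactly the undecidable membership question above. The proof in the paper avoids this by moving up one level: the recursion is valued in ${\rm Pow}({\rm Pow}(X))$, so each node denotes a \emph{set of} subsets, and property 3 is delegated to a unary constructor whose relation ${\cal D}$ relates a child value $U$ to any set $V$ of subsets such that for \emph{every} $u \in U$ and \emph{every} $A \subseteq X$ satisfying the hypothesis some $v \in V$ works. Totality of ${\cal D}$ is then proved (\reflemm{Dtotal}) from Fullness (applied to the total relations from ${\cal R}_u$ to $X$, for each fixed $u$) together with Strong Collection --- no case distinction and no choice over $C$ is needed --- and ${\cal P} = \bigcup {\rm Im}(\rho)$ flattens the result. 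If you want to salvage your single-subset-valued version, you would have to build this Fullness step into a constructor, which effectively forces the node values to be sets of subsets and brings you back to the argument in the text.
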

We first apply \reflemm{paandcollsquares} to construct a base map $k$ and a  collection square
\diag{ F \ar[d]_k \ar[r]^r & D \ar[d]^h \\
E \ar[r] & C.}
Then we let $\phi: F + \{ 0 \} \to E + \{ 0, 1 \}$ be $k + l$ with $l : \{ 0 \} \to \{ 0, 1 \}$ the map sending 0 to 0. Write $P = {\rm Pow}({\rm Pow}(X))$ and define ${\cal D} \subseteq P \times P$ to consists of those pairs $(U, V) \in P^2$ such that
\begin{quote}
for every $u \in U$ and $A \subseteq X$, if we have $(\forall (a, b) \in {\cal R}_u) \, (\exists x \in A) \, x \in b$, then there is a $v \in V$ with $v \subseteq A$, such that $(\forall (a, b) \in {\cal R}_u) \, (\exists x \in v) \, x \in b$.
\end{quote}
We wish to apply the axiom of relativised dependent choice for W-types to obtain a map from $W_{\phi}$ to $P$.
\begin{quote}
The class is $P = {\rm Pow}({\rm Pow}(X))$. $R_1 \subseteq 1 \times P$ consists just of the pair $(*, \{ \{ x \} \, : \, x \in X\})$. $R_0 \subseteq P \times P$ is ${\cal D}$. And, finally, for every $e \in E$, we take the relation $R_e \subseteq P^{F_e} \times P$ to consist of all $(\phi, U)$ such that
\[ U = \{ \bigcup {\rm Im}(m: F_e \to {\rm Pow}(X)) \, : \, (\forall f \in F_e) \, m(f) \in \phi(f) \}. \]
\end{quote}
We first need to check that all relations here are total.
\begin{lemm}{Dtotal}
$(\forall U \in P) \, (\exists V \in P) \, (U, V) \in {\cal D}$.
\end{lemm}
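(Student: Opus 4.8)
$(\forall U \in P) \, (\exists V \in P) \, (U, V) \in {\cal D}$, where $P = {\rm Pow}({\rm Pow}(X))$ and $(U,V) \in {\cal D}$ means: for every $u \in U$ and $A \subseteq X$, if $(\forall (a,b) \in {\cal R}_u) \, (\exists x \in A) \, x \in b$, then there is $v \in V$ with $v \subseteq A$ and $(\forall (a,b) \in {\cal R}_u) \, (\exists x \in v) \, x \in b$.

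Let me think about what's being asked and how to prove it.

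We're given an arbitrary $U \in P$, so $U$ is a collection of subsets of $X$. We need to find a $V \in P$ (another collection of subsets of $X$) that works as a "witness supply" for the totality relation ${\cal D}$.

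The condition: for each $u \in U$ and each $A \subseteq X$ such that every rule $(a,b) \in {\cal R}_u$ has its conclusion $b$ met by $A$ (i.e., $b \between A$), we need a $v \in V$ with $v \subseteq A$ that still meets every rule in ${\cal R}_u$.

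So $V$ must contain, for each $u \in U$ and each suitable $A$, a small subset $v \subseteq A$ that picks out one witness from each $b$ where $(a,b) \in {\cal R}_u$.

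Now, here's the key. For a fixed $u$, the set ${\cal R}_u = \{(a,b) \in {\cal R} : a \subseteq u\}$ is a set (it's a subset of the set ${\cal R}$, carved out by bounded separation). Given $A$ with $(\forall (a,b) \in {\cal R}_u)(\exists x \in A)\, x \in b$, by Collection there is a set $v \subseteq A$ that contains, for each $(a,b) \in {\cal R}_u$, some $x \in b \cap A$. This $v$ is a subset of $X$. Its "size" is controlled: it can be taken to be the image of a choice function from ${\cal R}_u$, so it's a surjective image of (a subset of) ${\cal R}_u$.

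The issue is: the set $v$ depends on $A$, and there are class-many $A \subseteq X$. So we cannot simply collect all the $v$'s into a set by ranging over $A$. We need a uniform bound.

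Here is the trick. Instead of indexing by $A$, I observe that any admissible $v$ is a subset of $X$ of a bounded form. Define, for fixed $U$,
\[ V = \bigcup_{u \in U} \{ v \subseteq X : v \text{ is the image of a function } {\cal R}_u \to X \text{ choosing } x \in b \text{ for each } (a,b) \}. \]
But this still quantifies over too much. The clean way: for each $u \in U$, the candidate witnesses $v$ are images of functions $g: {\cal R}_u \to X$ with $g(a,b) \in b$ for all $(a,b)$. The collection of such functions is a subset of ${\rm Pow}({\cal R}_u \times X)$, hence a set by Subset Collection / Powerset... but **CZF** has no Powerset. This is precisely the obstacle.

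\medskip

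\textbf{The plan.} The main obstacle is producing the set $V$ predicatively, i.e., bounding the witness-sets $v$ without Powerset. The resolution must come from the \emph{fullness} property available in \textbf{CZF}: Subset Collection (Fullness) gives, for the sets ${\cal R}_u$ (more precisely for a suitable domain and codomain $X$) a \emph{set} $\Sigma_u$ of total relations refining every total relation. Concretely, I would apply Fullness to the pair $({\cal R}_u, X)$ — or rather to the "multivalued" relation sending $(a,b) \mapsto b$ — to obtain a set of multi-functions each of whose images is a candidate $v$. Taking the union over $u \in U$ of the images arising from this full set yields the desired $V$. Since $U$ is a set and each $\Sigma_u$ is a set (uniformly in $u$, again by Collection applied to $u \mapsto \Sigma_u$), $V$ is a set, so $V \in P$.

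\medskip

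\textbf{Key steps, in order.} First, fix $U \in P$ and note ${\cal R}_u \in {\rm Pow}({\cal R})$ is a set for each $u \in U$, uniformly. Second, for each $u$, apply Fullness to get a set of relations from ${\cal R}_u$ to $X$ that is full among the total ones; from each such relation extract its "projection" $v = \{ x \in X : (\exists (a,b))\, ((a,b),x) \in \text{rel} \wedge x \in b\}$, giving a set $V_u$ of subsets of $X$. Third, use Strong Collection on the map $u \mapsto V_u$ to form $V = \bigcup_{u \in U} V_u$, which lies in $P$. Fourth, verify $(U,V) \in {\cal D}$: given $u \in U$ and $A \subseteq X$ with every rule in ${\cal R}_u$ met by $A$, the relation $(a,b) \mapsto \{x \in A : x \in b\}$ is a total relation ${\cal R}_u \rightrightarrows A \subseteq X$, so it is refined by some relation in the full set, whose projection $v$ satisfies $v \subseteq A$ and meets every $(a,b) \in {\cal R}_u$; this $v \in V_u \subseteq V$.

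\medskip

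\textbf{Where the difficulty concentrates.} The one genuinely delicate point is getting the \emph{full} set of relations to work \emph{simultaneously} for all $A$: Fullness gives refinement of every total relation by one member of the fixed set $\Sigma_u$, and the crucial observation is that the relation induced by any particular $A$ is total (since $A$ meets every $b$), so it \emph{is} refined — this is exactly the content needed, and it is the same refinement mechanism used in \refprop{elimNIDimpliesfullness}. Everything else is routine bounded separation and Strong Collection. I would therefore present the proof as: reduce to Fullness for each $u$, collect uniformly over $u \in U$, and check the refinement clause against an arbitrary $A$.
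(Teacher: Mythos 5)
Your proposal is correct and follows essentially the same route as the paper: apply Fullness (Subset Collection) for each $u \in U$ to obtain a set of candidate witness-sets $v$ that refine every total relation induced by an admissible $A$, then use (Strong) Collection over the set $U$ to gather these into a single $V \in P$. The paper's proof is just a terser statement of exactly this two-step argument.
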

\begin{proof}
Fullness gives us, for every $u \in U$, a set $V$ such that for all $A \subseteq X$, if we have $(\forall (a, b) \in {\cal R}_u) \, (\exists x \in A) \, x \in b$, then there is a $v \in V$ with $v \subseteq A$, such that $(\forall (a, b) \in {\cal R}_u) \, (\exists x \in v) \, x \in b$. Applying the collection axiom to this statement and $U$ yields the desired conclusion.
\end{proof}

So the relativised dependent choice axiom for W-types gives us a map $\rho: W_{\phi} \to {\rm Pow}({\rm Pow}(X))$. Let ${\cal P} = \bigcup {\rm Im}(\rho)$. ${\cal P}$ clearly satisfies properties 1 and 3, hence we only check property 2.

\begin{lemm}{checkprop2}
${\cal P}$ satisfies property 2.
\end{lemm}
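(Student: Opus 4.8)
The plan is to realise the union $\bigcup {\rm Im}(t)$ as one of the sets produced at a single node of $W_\phi$, namely a node labelled by a suitable $e \in E$. The point is that, for such a node, the relation $R_e$ records \emph{all} unions $\bigcup {\rm Im}(m)$ that arise by choosing, for each $f \in F_e$, a set $m(f)$ lying in the $f$-th coordinate of the chosen family. So the whole problem reduces to finding an $e$ lying over $c$ together with a family of $W_\phi$-witnesses indexed by $F_e$ whose associated union is exactly $\bigcup {\rm Im}(t)$.

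First I would fix a map $t: D_c \to {\cal P}$ and, using ${\cal P} = \bigcup {\rm Im}(\rho)$, observe that the first projection
\[ L: G = \{ (d, w) \in D_c \times W_\phi \, : \, t(d) \in \rho(w) \} \longrightarrow D_c \]
is surjective, its fibres being inhabited precisely because each $t(d) \in {\cal P}$. Here $G$ is a set by bounded separation, since $\rho(w)$ is a set for every $w$. Next I would feed this surjection into the collection-square property (the ``Moreover'' clause of \reflemm{paandcollsquares}) of the second square relating $F$, $E$ and $C$. This yields an $e \in E$ lying over $c$ together with a map $K: F_e \to G$ satisfying $L \circ K = r_e$, where $r_e: F_e \to D_c$ is the restriction of $r$. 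Writing $\tau: F_e \to W_\phi$ for the second component of $K$, the equation $L \circ K = r_e$ gives $t(r_e(f)) \in \rho(\tau(f))$ for every $f \in F_e$; and condition 2 of \reflemm{paandcollsquares} (surjectivity of the comparison map to the fibre product) guarantees that $r_e$ is itself surjective onto $D_c$.

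Finally I would evaluate $\rho$ at the node $w^\ast = {\rm sup}_e(\tau)$. By the defining property of $\rho$ coming from \reftheo{dependentchoiceforWtypes}, we have $(\rho \circ \tau, \rho(w^\ast)) \in R_e$, so $\rho(w^\ast)$ consists of exactly those $\bigcup {\rm Im}(m)$ with $m: F_e \to {\rm Pow}(X)$ and $m(f) \in \rho(\tau(f))$ for all $f$. Taking $m(f) = t(r_e(f))$ is then a legitimate choice, so $\bigcup {\rm Im}(m) \in \rho(w^\ast) \subseteq {\cal P}$; and because $r_e$ is surjective this union is precisely $\bigcup_{d \in D_c} t(d) = \bigcup {\rm Im}(t)$, which finishes the argument.

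The step I expect to be the main obstacle is the reindexing from $D_c$ to $F_e$: one must simultaneously arrange that the index map $r_e$ is onto (so that the $F_e$-indexed union genuinely recovers $\bigcup {\rm Im}(t)$ rather than a sub-union) and that a \emph{coherent} family of $W_\phi$-witnesses $\tau(f)$ for the values $t(r_e(f))$ can be chosen all at once. Both requirements are delivered by the second collection square—surjectivity by its defining condition 2 and the coherent choice by its collection property—which is exactly why a genuine collection square, and not merely a commuting square, is needed here.
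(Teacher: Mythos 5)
Your proof is correct and follows essentially the same route as the paper's: apply the collection-square property of the $F,E,D,C$ square to the surjection witnessing $t(d)\in\bigcup{\rm Im}(\rho)$, then read off $\bigcup{\rm Im}(t)=\bigcup{\rm Im}(t\circ r_e)$ as an element of $\rho({\rm sup}_e(\tau))\subseteq{\cal P}$ via the definition of $R_e$. You merely make explicit two points the paper leaves tacit, namely the surjectivity of $r_e$ (from condition 2 of \reflemm{paandcollsquares}) and the evaluation of $\rho$ at the node ${\rm sup}_e(\tau)$.
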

\begin{proof}
Suppose $s: D_c \to {\cal P}$ is any map. Then
\[ (\forall d \in D_c) \, (\exists w \in W_{\phi}) \, s(d) \in \rho(w). \]
By the collection square property, there is an $e \in E$ together with a map $t: F_e \to W_{\phi}$ such that
\[ (\forall f \in F_e) \, (s \circ r)(f) \in (\rho \circ t)(f). \]
Then $\bigcup {\rm Im}(s) = \bigcup {\rm Im}(s \circ r) \in {\cal P}$.
\end{proof}

We conclude:

\begin{theo}{2ndproofoffullNID}
Full NID follows from the presentation axiom and the axiom of relativised dependent choice for W-types.
\end{theo}

\begin{coro}{fullNIDvalidontypethint}
The NID principle is valid on the type-theoretic interpretation of {\bf CZF} in {\bf ML}$_{1W}${\bf V}.
\end{coro}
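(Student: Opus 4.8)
The plan is to obtain the corollary by assembling \reftheo{2ndproofoffullNID} with the two soundness facts about Aczel's interpretation that were isolated in Section 6.1. The key observation is that the entire derivation behind \reftheo{2ndproofoffullNID} --- full NID from the presentation axiom together with the relativised dependent choice axiom for W-types --- is carried out within {\bf CZF}. Since Aczel's interpretation is a sound interpretation of {\bf CZF} in {\bf ML}$_{1W}${\bf V}, it respects this derivation: once the interpretation is shown to validate both hypotheses of \reftheo{2ndproofoffullNID}, it automatically validates their consequence. So the task reduces to checking that each of the two hypotheses holds under the interpretation.

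First I would invoke the fact, recalled at the start of Section 6.1 and due to Aczel \cite{aczel82}, that the interpretation validates the presentation axiom {\bf PA}. Second, I would invoke \reftheo{dependentchoiceforWtypes}, which establishes that the interpretation validates the dependent choice principle for W-types and, crucially, its relativised version in which the target may be a proper class. Both of these rely on the type theory having an iterative universe closed under W-types --- precisely the strength of {\bf ML}$_{1W}${\bf V} --- which is why the corollary is stated for this system rather than for {\bf ML}$_1${\bf V}; indeed it is the closure of the bases under exponentials together with the existence of $W_f$ that powers \reftheo{dependentchoiceforWtypes}.

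The point that requires the most care is the \emph{relativised} form of the W-type choice principle. In the second proof one applies dependent choice for W-types with target $P = {\rm Pow}({\rm Pow}(X))$ (see the proof of \reflemm{propertiesofU2ndtime}), which is in general a proper class, so the unrelativised statement of \reftheo{dependentchoiceforWtypes} would not suffice. I would therefore stress that it is exactly the class version of \reftheo{dependentchoiceforWtypes} --- whose validity under the interpretation rests on $W_f$ being injectively presented, as in the Lemma on page~47 of \cite{aczel86} --- that is being used. With both hypotheses of \reftheo{2ndproofoffullNID} thereby validated, full NID holds under the interpretation, and since the NID principle is what we have been calling full NID, the corollary follows.
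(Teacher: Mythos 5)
Your proposal is correct and is exactly the argument the paper intends: the corollary is stated without proof precisely because it follows by combining \reftheo{2ndproofoffullNID} with the validation of {\bf PA} and of the relativised dependent choice for W-types (\reftheo{dependentchoiceforWtypes}) under Aczel's interpretation. Your emphasis on needing the \emph{relativised} (class) form of the choice principle, since the second proof applies it with target ${\rm Pow}({\rm Pow}(X))$, is the right point to flag.
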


\section{Comparison with related work}

Originally this section was devoted to comparing the NID principle to some ideas developed by Peter Aczel and Hajime Ishihara in slides for talks and unpublished notes. However, after the author submitted this paper, their ideas coalesced in the SGA principle and the preprint \cite{aczeletal12}, written together with Takako Nemoto and Yasushi Sangu. So now it makes more sense to relate our work with what happens in \cite{aczeletal12}.

As it turns out, the relationship is very close: their SGA principle is equivalent to finitary NID. This section will be devoted to a proof of this fact.

\begin{defi}{setgenerated}
Let $S$ be a set and $X$ be a subclass of ${\rm Pow}(S)$. Recall that $X$ is \emph{set-generated}, if there is a set $G \subseteq X$ such that
\[ (\forall \alpha \in X) \, (\forall s \in \alpha) \, (\exists \beta \in G) \, s \in \beta \subseteq \alpha. \]
$X$ will be called \emph{strongly set-generated}, if there is a set $G \subseteq X$ such that
\[ (\forall \alpha \in X) \, (\sigma \in {\rm Finpow}(\alpha)) \, (\exists \beta \in G) \, \sigma \subseteq \beta \subseteq \alpha. \]
Here ${\rm Finpow}(\alpha)$ is the set of finite subsets of $\alpha$ (see \refrema{notation}).
\end{defi}

\begin{defi}{SGA}
The \emph{set generated axiom} (abbreviated as \emph{SGA}) is a principle which says that for each set $S$ and each subset $Z$ of ${\rm Finpow}(S) \times {\rm Pow}({\rm Pow}(S))$, the class
\[ {\cal M}(Z) := \{ \alpha \in {\rm Pow}(S) \, : \, (\forall (\sigma, \Gamma) \in Z) \, \sigma \subseteq \alpha \to (\exists U \in \Gamma) \, U \subseteq \alpha \} \]
is strongly set-generated.
\end{defi}

\begin{theo}{equivalent}
SGA and finitary NID are equivalent in {\bf CZF}.
\end{theo}
\begin{proof}
It is easy to see that SGA implies finitary NID: for if ${\cal R}$ is a set of finitary, non-deterministic rules on a set $S$, we put
\[ Z := \{ (a, \{ \{x \} \, : \, x \in b \}) \, : \, (a, b) \in {\cal R} \}. \]
Then ${\rm Clos}_{\cal R}(S) = {\cal M}(Z)$, so SGA implies that ${\rm Clos}_{\cal R}(S)$ is set-generated.

We now prove the converse. Let $S$ be a set and $Z \subseteq {\rm Finpow}(S) \times {\rm Pow}({\rm Pow}(S))$. Write
\begin{eqnarray*}
S^* & := & \bigcup \{ \Gamma \, : \, (\sigma, \Gamma) \in Z \} \cup {\rm Finpow}(S) \subseteq {\rm Pow}(S)
\end{eqnarray*}
(which is a set by the union and replacement axioms)
and consider the following finitary, non-deterministic definition ${\cal R}$ on $S^*$:
\begin{displaymath}
\begin{array}{ccccccc}
\infer[(\sigma, \Gamma) \in Z]{ \Gamma }{ \{ \sigma \}} &  &
\infer[u \in U \in S^*]{ \{ \{ u \} \} }{ \{ U \} } & &
\infer[\sigma \in {\rm Finpow}(S)]{ \{ \sigma \} }{ \{ \{ s \} \, : \, s \in \sigma \} }
\end{array}
\end{displaymath}
By finitary NID, there is a set $G^*$ which generates ${\rm Clos}_{\cal R}(S^*)$. Put
\[ G := \{ \{ s \in S \, : \, \{ s \} \in \gamma \} \, : \gamma \in G^* \}. \]

We first prove $G \subseteq {\cal M}(Z)$. So suppose $\alpha = \{ \{ s \in S \, : \, \{s \} \in \gamma \}$ with $\gamma \in G^*$, and suppose $(\sigma, \Gamma) \in Z$ with $\sigma \subseteq \alpha$. This implies that for every $s \in \sigma$ we have  $\{ s \} \in \gamma$. Hence we have $\sigma \in \gamma$ by applying the third rule. Then, by the first rule, we know that there is a set $U \in \gamma$ with $U \in \Gamma$. So it follows by the second rule that for every $u \in U$ we have $\{ u \} \in \gamma$, whence $u \in \alpha$. So $U \subseteq \alpha$, as desired.

Now we prove that $G$ strongly generates ${\cal M}(Z)$. Suppose $\alpha \in {\cal M}(Z)$ and $\sigma \in {\rm Finpow}(\alpha)$. We need to find a $\beta \in G$ with $\sigma \subseteq \beta \subseteq \alpha$. To this purpose, consider
\[ \gamma := S^* \cap {\rm Pow}(\alpha) \]
(which is a set by bounded separation). It is easy to see that $\gamma$ belongs to ${\rm Clos}_{\cal R}(S^*)$. So because $G^*$ generates, there is an element $\delta \in G^*$ with $\delta \subseteq \gamma$ and $\sigma \in \delta$. Write
\[ \beta := \{ s \in S \, : \{ s \} \in \delta \}. \]
By construction, $\beta \in G$. Moreover, $\sigma \subseteq \beta$, for if $s \in \sigma$, then we have $\{ s \} \in \delta$, because $\sigma \in \delta$ and $\delta$ is closed under the second rule; therefore $s \in \beta$ by definition. And, finally, we have $\beta \subseteq \alpha$, because if $s \in \beta$, then $\{ s \} \in \delta \subseteq \gamma \subseteq {\rm Pow}(\alpha)$; hence $s \in \alpha$.
\end{proof}

\begin{coro}{RDCimpliesSGA}
The axiom of relativised dependent choice {\bf RDC} implies finitary NID. Hence finitary NID is validated on the type-theoretic interpretation of {\bf CZF} in {\bf ML}$_1${\bf V}.
\end{coro}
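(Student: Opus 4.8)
The plan is to prove the substantive half, namely that \textbf{RDC} implies finitary NID, directly; the concluding sentence then follows because Aczel's interpretation of \textbf{CZF} in {\bf ML}$_1${\bf V} is already known to validate \textbf{RDC} (this uses no W-types, and so stays inside the weaker type theory). One could instead route the argument through \reftheo{equivalent} by deriving SGA, but building the generating family by hand is cleaner. So let ${\cal R}$ be a set of \emph{finitary} rules on a set $X$; I must produce a set $G \subseteq {\rm Clos}_{\cal R}(X)$ such that for every ${\cal R}$-closed $Y$ and every $y \in Y$ there is $\beta \in G$ with $y \in \beta \subseteq Y$. The guiding idea is that, because the rules are finitary, the closure of a finite set inside $Y$ is reached in $\omega$ stages and stays countable throughout, so the witnesses $\beta$ can all be taken countably enumerable.

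First I would set up the dependent choice. Working over a fixed ${\cal R}$-closed $Y$ and $y \in Y$, let $\phi(g)$ say that $g \colon \NN \to X$ is an enumeration with $y \in {\rm Im}(g) \subseteq Y$, and let $\psi(g,g')$ say that ${\rm Im}(g) \subseteq {\rm Im}(g') \subseteq Y$ and that for every $(a,b) \in {\cal R}$ with $a \subseteq {\rm Im}(g)$ one has $b \between {\rm Im}(g')$. The key point is totality of the step: given $g$ with $U := {\rm Im}(g)$ countable and contained in $Y$, the class of $(a,b) \in {\cal R}$ with $a \subseteq U$ is a subset of ${\cal R}$, hence a set; since $Y$ is ${\cal R}$-closed each such $b$ meets $Y$, so collection yields a set $W \subseteq Y$ of witnesses, and $U \cup W$ is again countable (here I use that ${\rm Finpow}$ of a countable set is countable, so only countably many $(a,b)$ are relevant). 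Thus some $g'$ with $\phi(g')$ and $\psi(g,g')$ exists. Starting from a constant enumeration of $\{y\}$, \textbf{RDC} produces $g_0, g_1, \ldots$; merging them along a bijection $\NN \cong \NN \times \NN$ gives $f \colon \NN \to X$ with $\beta := {\rm Im}(f) = \bigcup_n {\rm Im}(g_n)$. Then $y \in \beta \subseteq Y$, and $\beta$ is ${\cal R}$-closed: if $(a,b) \in {\cal R}$ and $a \subseteq \beta$ then, $a$ being finite, $a \subseteq {\rm Im}(g_n)$ for some $n$, whence $b \between {\rm Im}(g_{n+1}) \subseteq \beta$. This last step is exactly where finitariness is indispensable.

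It remains to exhibit $G$ as a genuine set, and this is the step I expect to be the real obstacle. The witness $\beta$ just built depends on $Y$ and on the choices made, but it is always \emph{countably enumerable}, i.e.\ of the form ${\rm Im}(f)$ for some $f \in X^{\NN}$. Since $X^{\NN}$ is a set by exponentiation, replacement turns $\{\, {\rm Im}(f) : f \in X^{\NN} \,\}$ into a set, and bounded separation carves out
\[ G := \{\, {\rm Im}(f) : f \in X^{\NN}, \ {\rm Im}(f) \text{ is } {\cal R}\text{-closed} \,\}, \]
the predicate ``${\rm Im}(f)$ is ${\cal R}$-closed'' being $\Delta_0$ in the parameters ${\cal R}$ and $f$. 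By construction $G \subseteq {\rm Clos}_{\cal R}(X)$, and the previous paragraph shows $G$ generates ${\rm Clos}_{\cal R}(X)$. The whole argument breaks for infinitary rules on two counts, since a finite stage need no longer capture the hypothesis $a$ and the stages need no longer be countable, which is precisely why only \emph{finitary} NID is obtained. Finally, the ``Hence'' is immediate: \textbf{RDC} is validated by the interpretation into {\bf ML}$_1${\bf V}, so finitary NID is too.
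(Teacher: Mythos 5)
Your reduction of everything to countably enumerable sets is where the argument breaks. The step that fails is the totality claim for $\psi$: you assert that because ${\rm Finpow}(U)$ is countable for countable $U$, ``only countably many $(a,b)$ are relevant'', and hence that the witness set $W$ and the new stage $U \cup W$ are again countable. But the relevant set is $\{(a,b) \in {\cal R} : a \subseteq U\}$, and while there are only countably many possible premises $a$, there may be arbitrarily many rules in ${\cal R}$ sharing the same finite premise, each with a different conclusion $b$ demanding its own witness. Concretely, take $X = \NN^{\NN}$ and ${\cal R} = \{(\emptyset, \{x\}) : x \in X\}$: every rule is finitary, the unique ${\cal R}$-closed subset of $X$ is $X$ itself, so any generating set must contain $X$; but your $G$ consists only of sets of the form ${\rm Im}(f)$ with $f \in X^{\NN}$, and {\bf CZF} + {\bf RDC} cannot prove that $\NN^{\NN}$ is countably enumerable. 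So the $G$ you define fails to generate ${\rm Clos}_{\cal R}(X)$ in general: finitariness of the premises does not bound the size of the closure stages, only the size of each individual premise.

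The paper avoids this by not giving a direct construction at all: it quotes Theorem 5.1 of \cite{aczeletal12} (that {\bf RDC} implies SGA) and combines it with \reftheo{equivalent}. In that argument the stages of the iteration are arbitrary subsets of $Y$ obtained by strong collection, not countable ones, and the real work goes into confining the whole construction to a fixed set --- using subset collection/fullness to bound the possible one-step witness assignments --- so that the resulting family of unions is a set by replacement; compare the role of ${\cal P}$ and the collection squares in Section 6.3. If you want a direct proof, that is the mechanism you would need to reproduce; countability cannot substitute for it. The final sentence of your proposal (that {\bf ML}$_1${\bf V} validates {\bf RDC} without needing W-types) is fine and matches the paper.
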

\begin{proof}
The first statement follows from the previous result in combination with Theorem 5.1 in \cite{aczeletal12}. The second statement follows from the fact that {\bf RDC} is validated on the type-theoretic interpretation of {\bf CZF} in {\bf ML}$_1${\bf V} (see \cite{aczel82} and \cite{rathjentupailo06}).
\end{proof}

\begin{rema}{prooftheoreticstrength}
Since {\bf CZF} and {\bf ML}$_1${\bf V} have the same proof-theoretic strength (see \cite{grifforrathjen94}), it follows that finitary NID does not increase the proof-theoretic strength of {\bf CZF}.

On the other hand, it seems plausible that {\bf CZF} extended with the statement that all W-types exist has the same proof-theoretic strength as {\bf ML}$_{1W}${\bf V} and {\bf CZF} + {\bf REA}. If that is true, then it would follow from \reftheo{NIDandWtypes} and \refcoro{fullNIDvalidontypethint} that {\bf CZF} + {\bf NID} also has this strength; consequently, {\bf CZF} + {\bf NID} would proof-theoretically be a much stronger system than {\bf CZF}, and finitary NID would not imply full NID.
\end{rema}

\section{Conclusion and open questions}

We have introduced a new proof principle, the NID principle, and shown how it can be used to obtain results in the context of the constructive set theory {\bf CZF}, especially in formal topology. We are convinced that these results cannot be obtained in {\bf CZF} extended with either {\bf REA} or a combination of {\bf AMC} and {\bf WS}, but we do not have a proof of this fact.

We also believe that elementary NID cannot be proved in {\bf CZF} and that elementary NID does not imply finitary NID, but also here we lack proofs. Another question which we have left open is whether the NID principle is stable under such constructions from algebraic set theory as exact completion, realizability and sheaves. Again, this seems to us very likely to be true, but we have not tried very hard to find proofs.

\bibliographystyle{plain} \bibliography{ast}

\end{document}